\documentclass[a4paper,11pt]{amsart}
\usepackage{amsmath}
\usepackage{amsthm}
\usepackage{amssymb}
\usepackage{amsfonts}
\usepackage{enumerate}
\usepackage[all]{xy}
\usepackage{graphicx}
\usepackage{multicol}
\usepackage{fancyhdr}
\usepackage{tikz}

\usepackage[utf8]{inputenc}

\addtolength{\evensidemargin}{-1cm}
\addtolength{\oddsidemargin}{-1cm}
\addtolength{\textwidth}{2cm}

\pagestyle{fancy}
\fancyhead{}
\fancyfoot{}
\fancyhead[RO,LE]{\thepage}
\setlength{\headheight}{15pt}

\normalsize

\newtheoremstyle{theoremstyle}
  {10pt}      
  {5pt}       
  {\itshape}  
  {}          
  {\bfseries} 
  {:}         
  {.5em}      
  {}          

\newtheoremstyle{examplestyle}
  {10pt}      
  {5pt}       
  {}          
  {}          
  {\bfseries} 
  {:}         
  {.5em}      
  {}          

\theoremstyle{theoremstyle}
\newtheorem{theorem}{Theorem}[section]
\newtheorem*{theorem*}{Theorem}
\newtheorem{lemma}[theorem]{Lemma}
\newtheorem*{lemma*}{Lemma}
\newtheorem{proposition}[theorem]{Proposition}
\newtheorem*{proposition*}{Proposition}
\newtheorem{corollary}[theorem]{Corollary}
\newtheorem*{corollary*}{Corollary}

\theoremstyle{examplestyle}

\newtheorem*{gp*}{Geometric Principle}

\newtheorem{definition*}{Definition}
\newtheorem{remark}[theorem]{Remark}
\newtheorem{remark*}{Remark}
\newtheorem{convention}[theorem]{Convention}
\newtheorem{convention*}{Convention}

\newtheorem{notation*}{Notation}

\newtheorem{stl*}{Structure Lemma}
\newtheorem*{tha*}{Theorem A}
\newtheorem*{thb*}{Theorem B}
\newtheorem*{thc*}{Theorem C}

\newcommand{\comment}[1]{}
\newcommand{\unaryminus}{\scalebox{0.75}[1.0]{\( - \)}}

\newcommand{\spec}[1]{\operatorname{spec}(#1)}

\newcommand{\Hom}{\operatorname{Hom}}

\newcommand{\End}{\operatorname{End}}

\newcommand{\rk}{\operatorname{rk}}

\newcommand{\Span}{\operatorname{span}}

\newcommand{\Mat}{\operatorname{Mat}}
\newcommand{\im}{\operatorname{im}}

\newcommand{\eins}{{\mathchoice
{\mathrm 1\mskip-4.2mu\mathrm l}{\mathrm 1\mskip-4.2mu\mathrm l}
{\mathrm 1\mskip-3.9mu\mathrm l}{\mathrm 1\mskip-4.0mu\mathrm l}}}

\newcommand{\N}{\mathbb{N}}
\newcommand{\Z}{\mathbb{Z}}
\newcommand{\R}{\mathbb{R}}
\newcommand{\C}{\mathbb{C}\mkern1mu}

\newcommand{\Sph}{\mathbb{S}}
\DeclareMathOperator{\Sym}{Sym}

\newcommand{\bmat}{\left(\begin{smallmatrix}}
\newcommand{\emat}{\end{smallmatrix}\right)}

\newcommand{\SO}{\mathrm{SO}}
\newcommand{\OO}{\mathrm{O}}

\newcommand{\U}{\mathrm{U}}

\newcommand{\Gtwo}{\mathrm{G}_2}

\begin{document}
\title[Classification of isoparametric hypersurfaces in spheres with $(g,m)=(6,1)$]{Classification of isoparametric hypersurfaces in spheres with $(g,m)=(6,1)$}
\author{Anna Siffert$^1$}%
\subjclass[2010]{Primary 53C40; Secondary 53C55, 53C30}%
\address{University of Pennsylvania\\
Department of Mathematics\\
Philadelphia PA 19104\\
USA}
\email{asiffert@math.upenn.edu}

\begin{abstract}
We classify the isospectral families $L(t)=\cos(t)L_0+\sin(t)L_1\in\mbox{Sym}(5,\R)$, $t\in\R$, with $L_0=\mbox{diag}(\sqrt{3},\tfrac{1}{\sqrt{3}},0,-\tfrac{1}{\sqrt{3}},-\sqrt{3})$.
Using this result we provide a classification of isoparametric hypersurfaces in spheres with $(g,m)=(6,1)$ and thereby 
give a simplified proof of the fact that any isoparametric hypersurface with $(g,m)=(6,1)$ is homogeneous.
This result was first proven by Dorfmeister and Neher \cite{dn}.
\end{abstract}

\maketitle

\footnotetext[1]{I would like to thank DFG for supporting this work with the grant SI 2077/1-1.}

\section*{Introduction}
\thispagestyle{empty}

The principal result of this paper is the following theorem.

\begin{theorem*}
Let $L(t)=\cos(t)L_0+\sin(t)L_1\in\mbox{Sym}(5,\R)$, $t\in\R$, be isospectral where $L_0=\mbox{diag}(\sqrt{3},\tfrac{1}{\sqrt{3}},0,-\tfrac{1}{\sqrt{3}},-\sqrt{3})$. Up to conjugation by an element $A\in\OO(5)$ with $AL_0A^{-1}=L_0$, the matrix $L_1$
is given by one of the following matrices
\begin{align*}
&L_1=\tfrac{1}{3\sqrt{3}}\left(\begin{smallmatrix}
0&5&0&2&0\\
5&0&4&0&2\\
0&4&0&4&0\\
2&0&4&0&5\\
0&2&0&5&0
\end{smallmatrix} \right),\,L_1=\tfrac{1}{\sqrt{6}}\left(\begin{smallmatrix}
0&0&0&0&3\sqrt{2}\\
0&0&1&0&0\\
0&1&0&1&0\\
0&0&1&0&0\\
3\sqrt{2}&0&0&0&0
\end{smallmatrix} \right),\,L_1=\tfrac{1}{\sqrt{6}}\left(\begin{smallmatrix}
0&0&3&0&0\\
0&0&0&\sqrt{2}&0\\
3&0&0&0&3\\
0&\sqrt{2}&0&0&0\\
0&0&3&0&0
\end{smallmatrix} \right),\\
&L_1=\tfrac{1}{\sqrt{3}}\left(\begin{smallmatrix}
0&1&0&2&0\\
1&0&0&0&2\\
0&0&0&0&0\\
2&0&0&0&1\\
0&2&0&1&0
\end{smallmatrix} \right),\,L_1=\tfrac{1}{\sqrt{3}}\left(\begin{smallmatrix}
0&\sqrt{3}&0&0&0\\
\sqrt{3}&0&0&2&0\\
0&0&0&0&0\\
0&2&0&0&\sqrt{3}\\
0&0&0&\sqrt{3}&0
\end{smallmatrix} \right),\,L_1=\tfrac{1}{\sqrt{3}}\left(\begin{smallmatrix}
0&0&0&0&3\\
0&0&0&1&0\\
0&0&0&0&0\\
0&1&0&0&0\\
3&0&0&0&0
\end{smallmatrix} \right).
\end{align*}
\end{theorem*}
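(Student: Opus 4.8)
The plan is to convert the isospectrality hypothesis into a finite list of polynomial equations in the entries of $L_1$ and then solve that system explicitly, modulo the stated symmetry. Since $L(t)$ has the same characteristic polynomial as $L_0$ exactly when the power sums $\operatorname{tr}(L(t)^k)$ are independent of $t$ for $k=1,\dots,5$, I would expand each $\operatorname{tr}(L(t)^k)$ in the linearly independent functions $\cos^{k-j}(t)\sin^{j}(t)$, $0\le j\le k$, and equate coefficients (note that the constant $\operatorname{tr} L_0^k$ is again such a combination, via $(\cos^2 t+\sin^2 t)^{k/2}$ when $k$ is even, and vanishes when $k$ is odd since $L_0$ has symmetric spectrum). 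This produces, in particular, $\operatorname{tr}(L_0^mL_1)=0$ for $m=0,\dots,4$; as $I,L_0,\dots,L_0^4$ span the diagonal matrices ($L_0$ having distinct eigenvalues), $L_1$ is forced to have zero diagonal and is thus described by the ten parameters $b_{ij}=(L_1)_{ij}$, $1\le i<j\le 5$. The remaining relations split into a quadratic block — $\operatorname{tr} L_1^2=\tfrac{20}{3}$, $\operatorname{tr}(L_0L_1^2)=0$, $2\operatorname{tr}(L_0^2L_1^2)+\operatorname{tr}(L_0L_1L_0L_1)=\operatorname{tr} L_0^4$, and $\operatorname{tr}(L_0^3L_1^2)+\operatorname{tr}(L_0^2L_1L_0L_1)=0$, each of which is a \emph{linear} equation in the nonnegative numbers $x_{ij}:=b_{ij}^{2}$ — and a higher block — $\operatorname{tr} L_1^3=0$, $\operatorname{tr}(L_0L_1^3)=0$, $\operatorname{tr} L_1^4=\operatorname{tr} L_0^4$, $\operatorname{tr}(L_0^2L_1^3)+\operatorname{tr}(L_0L_1L_0L_1^2)=0$, $\operatorname{tr}(L_0L_1^4)=0$, $\operatorname{tr} L_1^5=0$ — which genuinely involve the signs of the $b_{ij}$.

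Before solving, I would record the symmetries. The group $\{A\in\OO(5):AL_0A^{-1}=L_0\}$ consists of the diagonal matrices with entries $\pm1$ and acts on $L_1$ by $b_{ij}\mapsto\varepsilon_i\varepsilon_j b_{ij}$, hence effectively as $(\Z/2)^4$; this is exactly the equivalence appearing in the statement. For bookkeeping it helps to also use the further symmetries $L_1\mapsto-L_1$ (from $t\mapsto-t$) and $L_1\mapsto PL_1P$ with $P$ the reversal permutation (from $t\mapsto\pi-t$ together with $PL_0P=-L_0$): these cut down the number of cases, and at the end one simply checks which of the resulting normal forms are already conjugate under $(\Z/2)^4$.

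The heart of the argument — and the step I expect to be the main obstacle — is the explicit solution of this system. I would first handle the linear block in the unknowns $x_{ij}\ge0$: the four equations define an affine plane whose intersection with the positive orthant is a polytope, and from it, together with the requirement that the higher block be solvable with compatible signs, one deduces that the support of $L_1$ (the set of $(i,j)$ with $b_{ij}\ne 0$) can only be one of a short list of graphs on $\{1,\dots,5\}$. The index $3$ carrying the eigenvalue $0$ and the symmetric pairs $\{1,5\}$ and $\{2,4\}$ are distinguished here; e.g.\ $\operatorname{tr}(L_0L_1^2)=0$ reads $\sqrt3\,(r_1-r_5)+\tfrac1{\sqrt3}(r_2-r_4)=0$ for the squared row norms $r_i$ of $L_1$. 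Running through the admissible supports one at a time, the cubic relations $\operatorname{tr} L_1^3=0$ and $\operatorname{tr}(L_0L_1^3)=0$ (and then the quartic and quintic ones) either contradict nonnegativity or pin the $b_{ij}$ down to finitely many values, which after normalising signs are exactly the six matrices in the statement. Conversely, each of the six listed matrices is checked to satisfy all of the above trace identities — equivalently, that $\cos(t)L_0+\sin(t)L_1$ has characteristic polynomial $\lambda^{5}-\tfrac{10}{3}\lambda^{3}+\lambda$ for every $t$ — which is a short direct computation.
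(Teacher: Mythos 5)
Your framework is sound: isospectrality is indeed equivalent to $t$-independence of $\operatorname{tr} L(t)^k$ for $k\le 5$, the derivation that $L_1$ has zero diagonal from $\operatorname{tr}(L_0^mL_1)=0$ ($m=0,\dots,4$) together with $L_0$ having simple spectrum is correct, the split into a block linear in $x_{ij}=b_{ij}^2$ and a sign-sensitive higher block is correct, and the stabilizer of $L_0$ is correctly identified as the sign group acting by $b_{ij}\mapsto\varepsilon_i\varepsilon_j b_{ij}$. But the step you yourself flag as ``the main obstacle'' is a genuine gap, not merely an omission of routine details. The four linear equations cut the $10$-dimensional positive orthant down to a $6$-dimensional polytope, and you then assert that this, ``together with the requirement that the higher block be solvable with compatible signs,'' restricts the support of $L_1$ to a short list of graphs. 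No mechanism is offered for extracting that restriction: the higher-block equations are genuinely cubic through quintic in the $b_{ij}$, they mix signs and magnitudes, and there is no a priori reason the resulting elimination is feasible by hand, nor that the support is what gets pinned down first rather than, say, a one-parameter family of supports each carrying solutions. As written, the proposal reduces the theorem to ``solve this polynomial system,'' which is precisely the content of the theorem.

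This is also exactly where the paper's strategy diverges and does the real work. Rather than expanding traces, the paper complexifies via $L_{\pm}=\tfrac12(L_0\mp i L_1)$, substitutes $L(t)=e^{it}L_++e^{-it}L_-$ into the minimal-polynomial identity, sorts by Fourier frequency, and introduces the projector $P(t)$ onto $\ker L(t)$ with Fourier components $P_{\pm4},P_{\pm2},P_0$. The key structural input is a normal form for symmetric matrices $A$ with $A^{tr}=A$, $A^2=0$ (Lemma~\ref{stl}), which is applied successively to $L_+^4, L_+^3, L_+^2$, producing block shapes that turn the residual system into a handful of scalar equations in each of the four cases indexed by the nilpotency degree of $L_+$ (since $L_+^5=0$). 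That stratification by nilpotency degree, together with the orthogonality relations $P_iL_{\pm}P_j=0$, is what replaces your unproved ``short list of supports'' claim and makes the case analysis terminate. If you want to rescue your route, you would need to supply a concrete combinatorial argument bounding the supports compatible with the polytope and the cubic equations $\operatorname{tr}L_1^3=\operatorname{tr}(L_0L_1^3)=0$, or else import some structural reduction that plays the role of the paper's nilpotency stratification.
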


Using this result we classify isoparametric hypersurfaces in spheres with six different principal curvatures $g=6$ all of multiplicity $m=1$ and thereby give a simplified proof of a result of Dorfmeister and Neher \cite{dn}.

\smallskip

In \cite{mi1,mi3} Miyaoka claims to reprove the result of Dorfmeister and Neher. Based on the idea of \cite{mi1,mi3} Miyaoka \cite{mi2} proposed how to establish homogeneity for  isoparametric hypersurfaces in spheres with six different principal curvatures $g=6$ all of multiplicity $m=2$, which is the only remaining open case with $g=6$.
Using (parts of) our main result we give a counterexample to Miyaoka's proof \cite{mi1,mi3}.

\smallskip

The present paper is organized as follows: 
the above theorem is proved in Section\,\ref{sec3} and used in Section\,\ref{sec4} to classify isoparametric hypersurfaces in $\Sph^7$ with $g=6$. Finally, the counterexample to the proof of Miyaoka \cite{mi1,mi3} can be found in the Appendix.

\section{Classification of the isospectral families }
\label{sec3}
Subsections\,\ref{11}-\ref{app} of this section serve as preparation for Subsection\,\ref{ch3} in which we prove the theorem stated in the introduction.

\subsection{Minimal polynomial equation}
\label{11}
In what follows we consider $L(t)\in\mbox{Sym}(5,\R)$, $t\in\R$, with
$$\mbox{spec}(L(t))=\left\{-\sqrt{3},-\tfrac{1}{\sqrt{3}},0,\tfrac{1}{\sqrt{3}},\sqrt{3}\right\}\,\,\,\mbox{for all}\,\,t\in\R,$$
where the eigenvalues arise with multiplicity $m$. Below we use the short hand notation $L$ for $L(t)$.
Thus we obtain the minimal polynomial equation
\begin{align*}
0=(L^2-3\cdot\eins)\,(L^2-\tfrac{1}{3}\cdot\eins)\,L=(L^4-\tfrac{10}{3}\,L^2+\eins)\,L.
\end{align*}
We introduce the complexified operators $L_{\pm}\in\End(\R^{5m}\otimes\C)$ by $L_{\pm}=\tfrac{1}{2}\,(L_0\mp i\,L_1).$
Since $L_0,L_1\in\End \R^{5m}$ are symmetric, $L_+,L_-\in\End(\R^{5m}\otimes\C)$ are also symmetric.
Plugging $L(t)=\exp(it)\,L_{+}+\exp(-it)\,L_{-}$ in the above equation and sorting by different frequencies yields
\begin{alignat}{2}
 & L_+^5=0,&&L_-^5=0,\\
&15\,\sigma(L_+^4\,L_-)-10\,L_+^3=0,&&15\,\sigma(L_+\,L_-^4)-10\,L_-^3=0,\\
&10\,\sigma(L_+^3\,L_-^2)-10\,\sigma(L_+^2\,L_-)+L_+=0,\hspace{0.6cm}&&10\,\sigma(L_+^2\,L_-^3)-10\,\sigma(L_+\,L_-^2)+L_-=0,
\end{alignat}
where $\sigma(L_+^i\,L_-^j)\in\Sym(\R^{5m}\otimes\C)$ is given by the sum of all possible words of $L_+^i\,L_-^j$ divided by the number of possible words, for example
\begin{align*}
\sigma(L_+^3\,L_-)=\tfrac{1}{4}\,(L_+^3\,L_-+L_+^2\,L_-\,L_++L_+\,L_-\,L_+^2+L_-\,L_+^3).
\end{align*}
It suffices to consider the first equation in each of the above rows, since the remaining equations are obtained from these by complex conjugation.

\subsection{The projector onto the kernel of $L(t)$}
\begin{lemma}
\label{projector}
For $t\in\R$ the map $P(t):\R^{5m}\rightarrow \R^{5m}$ given by $P(t)=L(t)^4-\tfrac{10}{3}\,L(t)^2+\eins$
is the projector onto the $m$-dimensional kernel of $L(t)$ .
\end{lemma}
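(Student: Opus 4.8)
The plan is to read everything off from the spectral theorem. Since $L(t)$ is a symmetric endomorphism of $\R^{5m}$, it is orthogonally diagonalizable, and by hypothesis its eigenvalues are exactly $-\sqrt{3}$, $-\tfrac{1}{\sqrt{3}}$, $0$, $\tfrac{1}{\sqrt{3}}$, $\sqrt{3}$, each occurring with multiplicity $m$. The corresponding eigenspaces $E_\lambda$ are mutually orthogonal and span $\R^{5m}$, and $E_0=\ker L(t)$ has dimension $m$.

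First I would introduce the polynomial $p(x)=x^4-\tfrac{10}{3}x^2+1\in\R[x]$, so that $P(t)=p(L(t))$, and evaluate it at the five eigenvalues:
\[
p(0)=1,\qquad p(\pm\sqrt{3})=9-10+1=0,\qquad p\bigl(\pm\tfrac{1}{\sqrt{3}}\bigr)=\tfrac{1}{9}-\tfrac{10}{9}+1=0.
\]
Thus $p$ vanishes at every nonzero eigenvalue of $L(t)$ and equals $1$ at the eigenvalue $0$. Since $p(L(t))$ acts on each eigenspace $E_\lambda$ as multiplication by the scalar $p(\lambda)$, the operator $P(t)$ restricts to the identity on $E_0=\ker L(t)$ and to zero on every other eigenspace. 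Hence $P(t)$ is the orthogonal projection of $\R^{5m}$ onto $\ker L(t)$, whose image is $m$-dimensional, as claimed. Equivalently, one may observe that $P(t)=p(L(t))$ is symmetric because $L(t)$ is, and idempotent because $p(\lambda)^2=p(\lambda)$ for each eigenvalue $\lambda$; a symmetric idempotent is an orthogonal projector, and identifying its image with $\ker L(t)$ then finishes the argument.

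I expect no genuine obstacle here: the degree-four factor of the minimal polynomial exhibited in Subsection~\ref{11} already gives $p(L(t))\,L(t)=0$, i.e.\ the inclusion $\im P(t)\subseteq\ker L(t)$, and the only extra input is the trivial evaluation $p(0)=1$, which upgrades this inclusion to the statement that $P(t)$ fixes $\ker L(t)$ pointwise. The sole point deserving a word of care is that one must genuinely use that the spectrum consists of all five listed values, each with full multiplicity $m$, so that the eigenspaces exhaust $\R^{5m}$ and $P(t)$ cannot behave unexpectedly on a missing eigenspace.
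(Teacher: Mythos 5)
Your proof is correct, and it reaches the same conclusion via a slightly different route. The paper argues purely at the level of operator identities: it uses the minimal polynomial relation to get $LP=PL=0$ (so $\im P\subseteq\ker L$), notes that $Px=x$ for $x\in\ker L$ (so $\ker L\subseteq\im P$), and then verifies $P^2-P=(L^3-\tfrac{10}{3}L)LP=0$ by a one-line algebraic manipulation, never explicitly invoking the spectral theorem. You instead orthogonally diagonalize $L(t)$ and read off that $p(L(t))$ acts as $p(\lambda)$ on each eigenspace, with $p(0)=1$ and $p$ vanishing at the four nonzero eigenvalues. The underlying idea is identical — $p$ is precisely the interpolating polynomial that is $1$ at $0$ and $0$ at the other eigenvalues — but the paper's version is a bit more economical (it only needs the minimal polynomial equation already recorded in the preceding subsection, not the full spectral decomposition), while yours is arguably more transparent about \emph{why} this particular quartic does the job and makes the orthogonality of the projection immediate. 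Both are complete proofs, and your closing caveat about needing the full spectrum with multiplicity $m$ is the right thing to flag.
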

\begin{proof}
Below we use the short hand notation $P=P(t)$.

\smallskip

On the one hand we have $L\,P=P\,L=0$ by the minimal polynomial equation, i.e., $\im P\subset\ker L$.
On the other hand, $x\in\ker L$ implies $P\,x=x$, i.e., $\ker L\subset\im P$.
Consequently, $\im P=\ker L$.
Finally, 
\begin{align*}
P^2-P=(P-\eins)P=(L^4-\tfrac{10}{3}L^2)P=(L^3-\tfrac{10}{3}L)LP=0,
\end{align*}
i.e., $P(t)$ is a projector for all $t\in\R$.
\end{proof}

Substituting $L(t)=\exp(it)\,L_++\exp(-it)\,L_-$ in the formula for $P(t)$ yields
\begin{align*}
P(t)=\exp(4it)\,P_4+\exp(2it)\,P_2+P_0+\exp(-2it)\,P_{-2}+\exp(-4it)\,P_{-4},
\end{align*}
where $P_4,P_2,P_0,P_{-2},P_{-4}\in\Sym(\R^{5m}\otimes\C)$ are given by
\begin{align*}
P_4=L_+^4,\hspace{0.5cm}P_{-4}=\overline{P_4},\hspace{0.5cm} P_2= 4\,\sigma(L_+^3\,L_-)-\tfrac{10}{3}\,L_+^2,\hspace{0.5cm}P_{-2}=\overline{P_2}
\end{align*}
and $ P_0= 6\,\sigma(L_+^2\,L_-^2)-\tfrac{20}{3}\,\sigma(L_+\,L_-)+\eins$. Clearly, $P_0=\overline{P_0}.$

\begin{lemma}
\label{cmp}
The minimal polynomial equation is equivalent to
\begin{align*} 
L_+\,P_4=0,\,\,\,\,\,\,\,\,\,\,L_{+}\,P_2+L_{-}\,P_4=0,\,\,\,\,\,\,\,\,\,\,L_{+}\,P_0+L_{-}\,P_2=0.
\end{align*}
\end{lemma}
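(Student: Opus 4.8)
The plan is to obtain the three displayed equations as the Fourier coefficients, in the variable $t$, of the single operator identity $L(t)\,P(t)=0$.

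First I would note that $P(t)=L(t)^4-\tfrac{10}{3}L(t)^2+\eins$ is a polynomial in $L(t)$, so $L(t)$ and $P(t)$ commute; hence the minimal polynomial equation $(L(t)^4-\tfrac{10}{3}L(t)^2+\eins)\,L(t)=0$, required to hold for all $t\in\R$, is literally the same as the requirement $L(t)\,P(t)=0$ for all $t\in\R$. Now I substitute $L(t)=\exp(it)\,L_++\exp(-it)\,L_-$ together with the expansion $P(t)=\exp(4it)\,P_4+\exp(2it)\,P_2+P_0+\exp(-2it)\,P_{-2}+\exp(-4it)\,P_{-4}$ recorded just before the lemma, multiply out, and sort the result according to the frequencies $\exp(ikt)$. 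Only $k\in\{-5,-3,-1,1,3,5\}$ occur; the coefficients of $\exp(5it)$, $\exp(3it)$, $\exp(it)$ are precisely $L_+P_4$, $\;L_+P_2+L_-P_4$, $\;L_+P_0+L_-P_2$, and the coefficients of $\exp(-5it)$, $\exp(-3it)$, $\exp(-it)$ are their complex conjugates, because $\overline{L_+}=L_-$ and $\overline{P_k}=P_{-k}$.

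Since the functions $t\mapsto\exp(ikt)$, with $k$ ranging over the six odd integers above, are linearly independent over $\C$, the identity $L(t)\,P(t)=0$ for all $t\in\R$ holds if and only if all six frequency coefficients vanish; by the conjugation symmetry just observed this is equivalent to the vanishing of the three coefficients $L_+P_4$, $\;L_+P_2+L_-P_4$, $\;L_+P_0+L_-P_2$, which are exactly the stated equations. For the converse direction one simply reassembles: if the three equations hold then so do their complex conjugates, hence all six Fourier coefficients of $L(t)\,P(t)$ vanish, so $L(t)\,P(t)=0$ for all $t$, and the commutativity of $L(t)$ and $P(t)$ then returns the minimal polynomial equation.

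The computation is entirely routine and I do not expect a genuine obstacle. The only points requiring a little care are the frequency bookkeeping when multiplying the two trigonometric polynomials and the remark that the negative-frequency identities are the complex conjugates of the positive-frequency ones, so that the six a priori conditions collapse to the three listed.
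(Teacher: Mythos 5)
Your proposal is correct and is clearly the argument the paper intends (the lemma is stated without a written proof, but the surrounding text — the expansion of $P(t)$ into Fourier modes $P_k$ and the earlier derivation of the frequency equations — sets up exactly the computation you carry out). Rewriting the minimal polynomial equation as $L(t)P(t)=0$, expanding both factors as trigonometric polynomials in $t$, reading off the coefficients of $\exp(ikt)$ for $k\in\{-5,-3,-1,1,3,5\}$, and invoking linear independence of the exponentials together with the conjugation symmetry $\overline{L_\pm}=L_\mp$, $\overline{P_k}=P_{-k}$ gives precisely the three stated equations; and one can check directly (by unwinding the definitions of $P_4,P_2,P_0$ in terms of $\sigma$) that $L_+P_4$, $L_+P_2+L_-P_4$, and $L_+P_0+L_-P_2$ are the expressions already recorded in the paper's equations for the frequencies $5$, $3$, $1$.
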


\begin{corollary}
\label{ortlem}
$P_i\,L_{\pm}\,P_j=0$ for all $i,j\in I:=\left\{-4,-2,0,2,4\right\}.$
\end{corollary}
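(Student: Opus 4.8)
The statement to prove is Corollary~\ref{ortlem}: that $P_i\,L_{\pm}\,P_j = 0$ for all $i,j \in I = \{-4,-2,0,2,4\}$. The plan is to exploit the fact that $P(t)$ is an \emph{idempotent projecting onto the kernel of} $L(t)$, so that $L(t)\,P(t) = 0$ identically in $t$, together with the Fourier expansion $P(t) = \sum_{k} \exp(ikt)\,P_k$ and $L(t) = \exp(it)L_+ + \exp(-it)L_-$.

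First I would record the key ``sandwich'' identity: since $P(t)$ is the projector onto $\ker L(t)$, we have $P(t)\,L(t)\,P(t) = 0$ for every $t$ (indeed $L(t)P(t) = 0$ already, by Lemma~\ref{projector}). Next I would substitute the Fourier expansions of $P(t)$ and $L(t)$ into $P(t)\,L(t)\,P(t) = 0$. The left-hand side becomes a finite trigonometric polynomial in $t$, namely a sum of terms $\exp(i(i' + (\pm 1) + j')t)\,P_{i'}\,L_{\pm}\,P_{j'}$ ranging over $i', j' \in I$. Since this vanishes for all $t \in \R$, every Fourier coefficient must vanish: for each fixed integer $n$, the sum of $P_{i'}\,L_{\varepsilon}\,P_{j'}$ over all triples with $i' + \varepsilon + j' = n$ (where $\varepsilon = \pm 1$) is zero. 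This gives a system of relations, but not yet the individual vanishing claimed.

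The main obstacle is to pass from ``sum of terms with a fixed frequency vanishes'' to ``each individual term vanishes.'' The trick is to use the $\exp(it)$-grading more carefully — essentially, to conjugate. Observe that $L_{\pm}$ and the $P_k$ are the Fourier components of the one-parameter families $L(t)$ and $P(t)$, and the relation $L(t)P(t) = P(t)L(t) = 0$ can be ``graded'' by noting that if we replace $t$ by $t + s$ we get $L(t+s)P(t+s) = 0$; expanding in $s$ and then in $t$ separately gives finer relations. Concretely: from $P(t)L(t)=0$, multiply on the left by $P(s)$ and use that $P(t)\,x \in \ker L(t)$ while we want to detect $\ker L(s)$. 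A cleaner route: since $P(t)^2 = P(t)$ and $L(t)P(t)=0$, we get $P(t)L(t)P(t)=0$; now apply $\frac{1}{2\pi}\int_0^{2\pi} e^{-int}(\cdots)\,dt$ to isolate frequency $n$, \emph{then} additionally use the homogeneity under the substitution $L_{\pm} \mapsto e^{\pm i\theta}L_{\pm}$, $P_k \mapsto e^{ik\theta}P_k$ (which corresponds to the trivial reparametrization $t \mapsto t + \theta$ and hence preserves all the structural identities). Applying this $\theta$-rescaling to the frequency-$n$ relation $\sum_{i'+\varepsilon+j'=n} P_{i'}L_{\varepsilon}P_{j'} = 0$ shows that each summand, being homogeneous of the same degree $n$ in the rescaling, already satisfies the same relation — but since the $i'$ (equivalently the pair $(i',j')$ subject to $i'+\varepsilon+j'=n$) are distinct, a Vandermonde argument in $e^{i\theta}$ forces each $P_{i'}L_{\varepsilon}P_{j'}$ to vanish individually.

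I would present the argument as follows. Start from $P(t)L(t)P(t) = 0$ (immediate from Lemma~\ref{projector}). Write everything out as a trigonometric polynomial $\sum_{n} e^{int}\big(\sum_{i'+\varepsilon+j' = n,\ \varepsilon = \pm 1,\ i',j' \in I} P_{i'}L_{\varepsilon}P_{j'}\big) = 0$, valid for all $t$, hence each inner sum vanishes. Then for a fixed $n$, the pairs $(i',j')$ with $i' + j' = n \mp 1$ are determined by $i'$ alone, and the values of $i'$ that occur are distinct elements of $I$; invoking linear independence of the functions $\theta \mapsto e^{i(2i' \mp 1)\theta}$ after the rescaling $t \mapsto t+\theta$ (which leaves $P(t)L(t)P(t)=0$ intact) separates the terms, yielding $P_{i'}L_{\varepsilon}P_{j'} = 0$ for every admissible triple, i.e.\ for all $i,j \in I$ and both signs. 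One should double-check the bookkeeping: the admissible $i'$ given $n$ and $\varepsilon$ range over $\{i' \in I : n - \varepsilon - i' \in I\}$, a genuinely multi-element set in general (e.g.\ $n=0$, $\varepsilon = 1$ gives $i' \in \{-4,-2,0,2\}$ paired with $j' = \{3,1,-1,-3\}$ — wait, these must lie in $I$, so actually $i' \in \{-2,0,2,4\}$ giving $j' \in \{-3,-1,1,3\}$; here one must be careful that $P_{\pm3}, P_{\pm1}$ are \emph{not} defined, so in fact the only surviving terms have $j' \in I$, which trims the sum further). This careful truncation is exactly where one must be attentive, but once the index set is correctly identified the Vandermonde separation is routine.
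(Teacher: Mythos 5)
Your proposal contains a genuine gap in the key separation step, and the gap is fatal as written. You start from $P(t)\,L(t)\,P(t)=0$, Fourier-decompose in $t$, and claim that the reparametrization $t\mapsto t+\theta$ (equivalently $L_\pm\mapsto e^{\pm i\theta}L_\pm$, $P_k\mapsto e^{ik\theta}P_k$) then separates the individual summands by a Vandermonde argument. But as you yourself observe in passing, every term $P_{i'}L_\varepsilon P_{j'}$ appearing in the frequency-$n$ coefficient satisfies $i'+\varepsilon+j'=n$ and therefore picks up the \emph{same} factor $e^{in\theta}$ under the rescaling. There is no Vandermonde matrix to invert: the rescaling carries no new information beyond the original Fourier decomposition, since it is just the substitution $t\mapsto t+\theta$ and the Fourier coefficients are already constant in $t$. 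Concretely, the Fourier coefficients of $P(t)L(t)P(t)$ live at the odd frequencies $n=\pm 1,\pm 3,\pm 5,\pm 7,\pm 9$, giving (up to complex conjugation) only five relations among the twenty-five quantities $P_iL_+P_j$; five linear relations cannot force twenty-five individual vanishings, so no argument based solely on $P(t)L(t)P(t)=0$ can succeed.

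The paper's route is different and essential. It uses Lemma~\ref{cmp}, which is the Fourier decomposition of the \emph{unsandwiched} identity $L(t)P(t)=0$, namely
\begin{align*}
L_+P_4=0,\qquad L_+P_2+L_-P_4=0,\qquad L_+P_0+L_-P_2=0,
\end{align*}
and these three relations (plus transposes and conjugates) carry strictly more information than $P(t)L(t)P(t)=0$. Multiplying the first by $P_i$ on the left gives the seed equations $P_iL_+P_4=0$ for all $i$; these, together with their transposes and conjugates, are then fed back into the second and third relations of Lemma~\ref{cmp} to knock out successively more terms (e.g.\ $P_4L_-P_4=P_4(-L_+P_2)=-(L_+P_4)^{tr,c}\cdot\cdots=0$, and so on), until all fifty vanishings are established by this bootstrap. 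If you want to salvage your Fourier-theoretic framing, you should Fourier-decompose $L(t)P(t)=0$ directly to recover Lemma~\ref{cmp}, then note that $P(s)L(t)P(t)=0$ holds for \emph{independent} $s,t$, which after Fourier decomposition in $s$ yields $P_iL_+P_4=0$ immediately — and then you still need the bootstrapping; the Vandermonde shortcut does not exist.
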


\begin{proof}
We have to establish $5\times 2\times 5=50$ equations. Obviously, given one equation, the transposed and the conjugate equation are also true, which has to be considered when counting equations.
Applying $P_i$ with $i\in I$ from the left to $L_+\,P_4=0$ we obtain $P_j\,L_+\,P_4=0$ for $j\in I$.
These are $18$ equations.
Using this result and Lemma\,\ref{cmp} we get
\begin{align*}
&P_4\,L_-\,P_4=P_4\,(-L_+\,P_2)=-(L_+\,P_4)\,P_2=0, P_4\,L_-\,P_2=P_4\,(-L_+\,P_0)=0,\\
&P_{-4}\,L_+\,P_2=P_{-4}\,(-L_-\,P_4)=0, P_{-4}\,L_+\,P_0=P_{-4}\,(-L_-\,P_2)=0.
\end{align*}
Hence, we proved $2+4+4+4=14$ additional equations. These identities again together with the identity $L_{+}\,P_2+L_{-}\,P_4=0$ of Lemma\,\ref{cmp} imply
$P_0\,L_+\,P_2=P_0\,(-L_-\,P_4)=0$ and similarly $P_2\,L_+\,P_2=0$ and $P_{-2}\,L_+\,P_{2}=0$, which are $4+2+4=10$ additional equations.
Combining these identities with Lemma\,\ref{cmp} yields $P_2\,L_-\,P_2=P_2\,(-L_+\,P_0)=0$ and $P_{-2}\,L_+\,P_0=P_{-2}\,(-L_-\,P_2)=0$, which are $2+4=6$ additional equations. The two remaining equations, $P_0\,L_{\pm}\,P_0=0$, are obtained by combining $P_{-2}\,L_+\,P_0=0$ and $L_{+}\,P_0+L_{-}\,P_2=0$.
\end{proof}

\subsection{The span of the kernel over time}
Following Miyaoka \cite{mi1} we introduce
\begin{align*}
E={\Span}_{t\in\R}\ker L(t)\subset \R^{5m}.
\end{align*}
Obviously, the independence of $\ker L(t)$ of $t\in\R$ is equivalent to $\dim E=m$. 

\begin{lemma}
\label{ehut}
\label{dr}
$E=\sum_{i\in I}\im P_i$ and $\dim E\leq 3m$.
\end{lemma}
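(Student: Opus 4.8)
The plan is to prove the two assertions separately, the equality $E=\sum_{i\in I}\im P_i$ first and the dimension bound $\dim E\le 3m$ second. For the equality, recall that $P(t)$ is the projector onto $\ker L(t)$ by Lemma\,\ref{projector}, so $\ker L(t)=\im P(t)$ for each fixed $t$. Hence $E=\Span_{t\in\R}\im P(t)$. Using the Fourier expansion $P(t)=\sum_{k\in I}\exp(ikt)\,P_k$, every vector $P(t)x$ lies in $\sum_{i\in I}\im P_i$, which gives the inclusion $E\subseteq\sum_{i\in I}\im P_i$. For the reverse inclusion I would extract each $P_i$ from the family $\{P(t)\}_{t\in\R}$ by the usual Fourier-coefficient trick: since the functions $t\mapsto\exp(ikt)$, $k\in I$, are linearly independent on $\R$, one can choose finitely many values $t_1,\dots,t_5$ so that the $5\times 5$ matrix $(\exp(ikt_\ell))_{k,\ell}$ (a Vandermonde-type matrix in the distinct points $\exp(it_\ell)$) is invertible; then each $P_i$ is a linear combination $\sum_\ell c_\ell\,P(t_\ell)$, so $\im P_i\subseteq\Span_\ell\im P(t_\ell)\subseteq E$. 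Summing over $i\in I$ yields $\sum_{i\in I}\im P_i\subseteq E$, hence equality. (One should note $E$ is indeed a subspace, as the span of a family of subspaces.)

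For the dimension bound, the key input is Corollary\,\ref{ortlem}, which gives $P_i\,L_\pm\,P_j=0$ for all $i,j\in I$; in particular, taking $j=i$ and using $L_0=L_++L_-$, we get $P_i\,L_0\,P_i=0$, and more generally $P_i\,L_0\,P_j=0$ for all $i,j$. The strategy is to show that the restriction of $L_0$ to $E=\sum_i\im P_i$ is highly degenerate. Concretely, since $L_0=\diag(\sqrt 3,\tfrac1{\sqrt3},0,-\tfrac1{\sqrt3},-\sqrt3)\otimes\eins_m$ has eigenvalues $\pm\sqrt3,\pm\tfrac1{\sqrt3},0$ each with multiplicity $m$, the relation $\langle L_0\,\xi,\eta\rangle=0$ for all $\xi,\eta\in E$ (which follows from $P_i L_0 P_j=0$ together with $E=\sum\im P_i$ and the symmetry of $L_0$ and the $P_i$) says that $E$ is isotropic for the symmetric bilinear form $(\xi,\eta)\mapsto\langle L_0\xi,\eta\rangle$. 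A maximal isotropic subspace for a symmetric form of signature $(2m,2m)$ with $m$-dimensional radical has dimension at most $2m+m=3m$: the radical (the $0$-eigenspace of $L_0$, dimension $m$) contributes fully, and an isotropic subspace of the nondegenerate part of signature $(2m,2m)$ has dimension at most $2m$. Therefore $\dim E\le 3m$.

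The main obstacle is making the isotropy argument fully rigorous: one must be careful that $E$ is isotropic, not merely that each $\im P_i$ is, since a sum of isotropic subspaces need not be isotropic in general. This is exactly where Corollary\,\ref{ortlem} is used in its full strength — the \emph{cross terms} $P_i L_\pm P_j=0$ for $i\ne j$ are what guarantee $\langle L_0\,\im P_i,\im P_j\rangle=0$, so that $\langle L_0\cdot,\cdot\rangle$ vanishes on all of $E\times E$. Once that is in hand, the bound $\dim E\le 3m$ is the standard linear-algebra fact about isotropic subspaces recorded above, applied to the form $\langle L_0\cdot,\cdot\rangle$ whose positive index, negative index, and nullity are each $m$.
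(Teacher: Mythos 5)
Your proof of the first part --- extracting each $P_i$ from the family $P(t)$ by sampling at finitely many $t$ and inverting a Vandermonde system --- is essentially the paper's own argument; the paper merely writes out three explicit such combinations, using $P(t)$, $P(t+\tfrac{\pi}{3})$, $P(t+\tfrac{2\pi}{3})$ and $P(t+\tfrac{\pi}{2})$, in place of an abstract invertibility statement. For the bound $\dim E\leq 3m$ you take a genuinely different route. The paper's argument is a direct dimension count: with $d=\dim E$, rank-nullity together with $\dim\ker L(t)=m$ give $\dim(L(t)E)\geq d-m$; Corollary\,\ref{ortlem} gives $L(t)E\subseteq E^{\perp}$, hence $\dim E^{\perp}\geq d-m$; and then $5m=\dim E+\dim E^{\perp}\geq 2d-m$ yields $d\leq 3m$. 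You instead observe that Corollary\,\ref{ortlem} makes $E$ totally isotropic for the symmetric bilinear form $\langle L_0\cdot,\cdot\rangle$, which has signature $(2m,2m)$ and nullity $m$, and invoke the standard bound --- radical of dimension $m$ plus Witt index $2m$ of the nondegenerate quotient --- to conclude $\dim E\leq 3m$. Both rest on the same orthogonality input from Corollary\,\ref{ortlem}; the paper's version is the more elementary, needing only rank-nullity and $\dim E+\dim E^\perp=5m$, whereas yours is conceptually tidier as a statement about isotropic subspaces. One slip to correct: your closing sentence asserts that the form's ``positive index, negative index, and nullity are each $m$'', which would sum to $3m$ rather than $5m$ and would give the false bound $\dim E\leq 2m$; you mean signature $(2m,2m)$ with nullity $m$, as you correctly state a few lines earlier.
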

\begin{proof}
Since $\im P(t)=\ker L(t)$ we have to prove ${\Span}_{t\in\R}\im P(t)=\sum_{i\in I}\im P_i$.
Clearly,
${\Span}_{t\in\R}\im P(t)\subseteq\sum_{i\in I}\im P_i.$
Hence the first claim follows from the identities
\begin{align}
\label{1}
\exp(4it)P_4+P_0+\exp(-4it)P_{-4}&=\tfrac{1}{2}(P(t)+P(t+\tfrac{\pi}{2})),\\
\label{2}P_0&=\tfrac{1}{3}(P(t)+P(t+\tfrac{\pi}{3})+P(t+\tfrac{2\pi}{3})),\\
\label{3}\exp(2it)P_2+\exp(-2it)P_{-2}&=\tfrac{1}{2}(P(t)-P(t+\tfrac{\pi}{2})).
\end{align}
In order to prove the second claim let $d=\dim E$.
Using $\dim(\ker L(t))=m$ for $t\in\R,$ we get $\dim(L(t)E)\geq d-m.$
Corollary\,\ref{ortlem} implies $L(t)\,E\perp E$ for all $t\in\R$ and thus $L_{\pm}\,E\perp E.$
Combining $L(t)E\subset E^{\perp}$ and $\dim(L(t)E)\geq d-m$ we obtain
$\dim E^{\perp}\geq d-m.$
From $E\oplus E^{\perp}=\R^{5m}$ we have 
$\dim E+\dim E^{\perp}=\dim \R^{5m}$. Thus we get
$5 m=\dim \R^{5m}=\dim E+\dim E^{\perp}\geq 2d-m,$
whence the claim.
\end{proof}

\begin{corollary}
$L(t)\,E\perp E$ for all $t\in\R$ and thus $L_{\pm}\,E\perp E.$
\end{corollary}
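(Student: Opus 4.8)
The plan is to obtain the corollary directly from Lemma~\ref{dr} and Corollary~\ref{ortlem}; in fact the assertion $L(t)\,E\perp E$ already appeared as an intermediate step in the proof of Lemma~\ref{dr}, so all that is really needed is to isolate and spell out that step.

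I would first prove $L_\pm E\perp E$ and then deduce $L(t)E\perp E$ from it. By Lemma~\ref{dr}, every vector of $E$ is a $\C$-linear combination of vectors of the form $P_i u$ with $i\in I$ and $u\in\R^{5m}\otimes\C$. Extend the Euclidean inner product of $\R^{5m}$ to a $\C$-bilinear form $\langle\cdot,\cdot\rangle$ on $\R^{5m}\otimes\C$; since $L_\pm$ and all $P_i$ are symmetric ($A\tp=A$), they satisfy $\langle Ax,y\rangle=\langle x,Ay\rangle$ for this form. Hence for generators $x=P_iu$ and $y=P_jv$ of $E$ one computes $\langle L_\pm x,y\rangle=\langle u,\,P_i\,L_\pm\,P_j\,v\rangle$, which vanishes by Corollary~\ref{ortlem}; by bilinearity $\langle L_\pm x,y\rangle=0$ for all $x,y\in E$, and restricting to real vectors gives $L_\pm E\perp E$. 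Substituting $L(t)=\exp(it)L_++\exp(-it)L_-$ then yields $\langle L(t)x,y\rangle=\exp(it)\langle L_+x,y\rangle+\exp(-it)\langle L_-x,y\rangle=0$ for all $x,y\in E$, which is the first claim.

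The only point that needs a little care — and the closest thing here to an obstacle — is bookkeeping between the real subspace $E\subset\R^{5m}$ and the a priori complex subspaces $\im P_i\subset\R^{5m}\otimes\C$: one must use the identities \eqref{1}--\eqref{3} to see that $\sum_{i\in I}\im P_i$ is genuinely the complexification of $E$ (equivalently, that it is closed under conjugation, which follows from $P_{-i}=\overline{P_i}$), and one must keep in mind that ``symmetric'' is meant in the sense $A\tp=A$, i.e. the transpose condition rather than self-adjointness for the Hermitian form. Everything else is immediate.
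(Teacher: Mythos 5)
Your proof is correct and takes essentially the same approach as the paper: the paper simply asserts inside the proof of Lemma~\ref{ehut} that Corollary~\ref{ortlem} implies $L(t)E\perp E$ (and "thus" $L_\pm E\perp E$), leaving exactly the bookkeeping you spelled out implicit. The only cosmetic difference is that you establish $L_\pm E\perp E$ first and deduce $L(t)E\perp E$ from it, whereas the paper states them in the opposite order, which is immaterial.
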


\begin{lemma}
The following five statements are equivalent: (i) $\ker L(t)$ is constant, (ii) $\dim E=m$, (iii) $L(t)\,E=0$ for $t\in\R$,
(iv) $L_+\,E=0$,	 (v) $L_+\,P_i=0$ for all $i\in\left\{-4,-2,0,2,4\right\}.$
\end{lemma}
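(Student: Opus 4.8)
The plan is to prove the cyclic chain of implications (i)\,$\Rightarrow$\,(ii)\,$\Rightarrow$\,(iii)\,$\Rightarrow$\,(iv)\,$\Rightarrow$\,(v)\,$\Rightarrow$\,(i), so that each link is a short, direct argument. The equivalence (i)\,$\Leftrightarrow$\,(ii) is essentially the remark already made right after the definition of $E$: constancy of $\ker L(t)$ means all the spaces $\ker L(t)$ coincide with a fixed $m$-dimensional subspace, and their span $E$ then has dimension $m$; conversely if $\dim E=m$ then since each $\ker L(t)\subset E$ has dimension $m$ we must have $\ker L(t)=E$ for every $t$, which is constancy. So those two I would dispatch in one sentence.

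For (ii)\,$\Rightarrow$\,(iii): if $\dim E=m$, then by the previous corollary $L(t)E\perp E$, but also $L(t)E\subset E$ — indeed $L(t)$ maps the $t$-th kernel to zero, and since $E=\ker L(t)$ under (ii), we get $L(t)E=L(t)\ker L(t)=0$. (Alternatively, without invoking (i): $\dim(L(t)E)\ge \dim E-\dim\ker L(t)=0$ trivially, which is not enough, so the cleaner route really is to use $E=\ker L(t)$.) For (iii)\,$\Rightarrow$\,(iv): writing $L(t)=e^{it}L_++e^{-it}L_-$ and using that $L(t)E=0$ for all $t$, one extracts the Fourier coefficients — evaluating at enough values of $t$ (or integrating against $e^{\mp it}$) forces $L_+E=0$ and $L_-E=0$ separately; conversely $L_+E=0$ together with its conjugate $L_-E=0$ gives $L(t)E=0$, so (iv)\,$\Rightarrow$\,(iii) as well, but we only need one direction here. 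For (iv)\,$\Rightarrow$\,(v): by Lemma\,\ref{dr}, $E=\sum_{i\in I}\im P_i$, so in particular $\im P_i\subset E$ for each $i\in I$; hence $L_+P_i$ has image inside $L_+E=0$, giving $L_+P_i=0$. Finally (v)\,$\Rightarrow$\,(i): if $L_+P_i=0$ for all $i\in I$, then taking conjugates $L_-P_{-i}=\overline{L_+P_i}=0$, hence $L_-P_i=0$ for all $i$ as well; now $L(t)P(t)=0$ always, but expanding $L(t)=e^{it}L_++e^{-it}L_-$ and $P(t)=\sum_i e^{2it\cdot(i/2)}P_i$ shows $L(t)$ already annihilates each $\im P_i$, and since $\ker L(t)=\im P(t)\subset\sum_i\im P_i=E$ while conversely $E\subset\ker L(t)$ because $L(t)E=\sum_i L(t)\im P_i=0$, we conclude $\ker L(t)=E$ is independent of $t$.

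The only step with any subtlety is (iii)\,$\Rightarrow$\,(iv), the Fourier-extraction step: one must be careful that $L(t)x=0$ for \emph{all} $t$ and \emph{all} $x\in E$ really does force the two operators $L_+$ and $L_-$ to kill $E$ individually, not merely in the combination $e^{it}L_++e^{-it}L_-$. This is immediate once one notes that for fixed $x\in E$ the function $t\mapsto L(t)x=e^{it}(L_+x)+e^{-it}(L_-x)$ is a $\C^{5m}$-valued trigonometric polynomial vanishing identically, so its two Fourier coefficients $L_+x$ and $L_-x$ both vanish; but it is worth spelling out because it is the one place where the $t$-dependence is genuinely used rather than bookkept. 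Everything else is immediate from Lemma\,\ref{dr}, Corollary\,\ref{ortlem}, and the definition of $E$, so I would keep the write-up to a compact cycle of five short implications plus this one paragraph of care.
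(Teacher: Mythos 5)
Your proof is correct, and it takes essentially the same route as the paper: the paper's entire proof reads ``The equivalence of (iv) and (v) follows from Lemma\,\ref{ehut}, the rest is obvious,'' and your cycle (i)\,$\Rightarrow$\,(ii)\,$\Rightarrow$\,(iii)\,$\Rightarrow$\,(iv)\,$\Rightarrow$\,(v)\,$\Rightarrow$\,(i) simply spells out that remark, with the decomposition $E=\sum_{i\in I}\im P_i$ from Lemma\,\ref{ehut} doing the real work in the steps (iv)\,$\Rightarrow$\,(v) and (v)\,$\Rightarrow$\,(i). The Fourier-extraction observation you flag in (iii)\,$\Rightarrow$\,(iv) is the right thing to be careful about, though it is the sort of point the paper would count as ``obvious.''
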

\begin{proof}
The equivalence of (iv) and (v) follows from Lemma\,\ref{ehut}, the rest is obvious. 
\end{proof}

\begin{remark}
We will see below (see e.g. Lemma\,\ref{falle4nn}) that the minimal polynomial equation of one focal manifold is not sufficient to prove $\dim E=m$:
 we construct explicitly isospectral families which satisfy the minimal polynomial equation but
have a non-constant kernel. 
\end{remark}

\subsection{Some linear algebra}
\label{app}

In this subsection we provide some linear algebra results which we will need for the proofs in Subsection\,\ref{ch3}.

\smallskip

We denote by $\left\{e_1,e_2\right\}$ and $J=\bigl(\begin{smallmatrix}
0&-1\\ 1&0
\end{smallmatrix} \bigr)$ the standard basis of $\C^2$ and the usual almost complex structure of $\C^2$, respectively. Below we work with the basis $\left\{e_+,e_-\right\}$ of $\C^2$ built by the isotropic vectors
$e_{\pm}=\tfrac{1}{\sqrt{2}}\,(e_1\pm i\,e_2)$. A basis of $M_2(\C)$ is given by $\left\{\rho,\overline{\rho},\sigma,\overline{\sigma}\right\}$, where
$\rho=e_+\,e_-^{tr}=\tfrac{1}{2}\,(\eins+i\,J),\,\overline{\rho}=e_-\,e_+^{tr}=\tfrac{1}{2}\,(\eins-i\,J),\,\sigma=e_+\,e_+^{tr}$ and $\overline{\sigma}=e_-\,e_-^{tr}$.

\begin{lemma}
 The following identities hold:
\begin{enumerate}
\item $\rho^2=\rho,\,\overline{\rho}^2=\overline{\rho},\,\rho\overline{\rho}=0,\,\overline{\rho}\rho=0,\rho+\overline{\rho}=\eins$, $i J=\rho-\overline{\rho}, \,\rho^{tr}=\overline{\rho},\,e_+^{tr}\,\rho=0,\,e_-^{tr}\,\overline{\rho}=0$.
\item $\sigma^2=0,\,\overline{\sigma}^2=0,\,\sigma^{tr}=\sigma,\,\overline{\sigma}^{tr}=\overline{\sigma},\,e_+^{tr}\,\sigma=0,\,e_-^{tr}\,\overline{\sigma}=0.$
\item $\rho\,\sigma=\sigma=\sigma\,\overline{\rho},\hspace{0.2cm}\overline{\rho}\,\sigma=\sigma\,\rho=0,\hspace{0.2cm}\sigma\,\overline{\sigma}=\rho.$
\end{enumerate}
\end{lemma}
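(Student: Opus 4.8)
The plan is to reduce the entire lemma to a handful of elementary identities about $e_1,e_2,e_\pm$. Since $\{e_1,e_2\}$ is the standard orthonormal basis of $\C^2$, we have $e_i^{tr}e_j=\delta_{ij}$; feeding $e_\pm=\tfrac{1}{\sqrt 2}(e_1\pm i e_2)$ into this and expanding yields
\begin{align*}
e_+^{tr}e_+=e_-^{tr}e_-=0,\qquad e_+^{tr}e_-=e_-^{tr}e_+=1,
\end{align*}
i.e.\ the vectors $e_\pm$ are isotropic and dual to each other. I would record these four scalars first, since they are essentially the only input the rest of the argument needs.

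Next I would observe that $\rho=e_+e_-^{tr}$, $\overline{\rho}=e_-e_+^{tr}$, $\sigma=e_+e_+^{tr}$, $\overline{\sigma}=e_-e_-^{tr}$ are all rank-one dyads, and that dyads compose by the rule $(uv^{tr})(wz^{tr})=(v^{tr}w)\,uz^{tr}$. Substituting the four scalars above then settles every product identity in (1)--(3) in a single line each: for instance $\rho^2=(e_-^{tr}e_+)\,\rho=\rho$, $\rho\overline{\rho}=(e_-^{tr}e_-)\,e_+e_+^{tr}=0$, $\sigma^2=(e_+^{tr}e_+)\,\sigma=0$, $\rho\sigma=(e_-^{tr}e_+)\,\sigma=\sigma$, $\sigma\overline{\rho}=(e_+^{tr}e_-)\,\sigma=\sigma$, $\overline{\rho}\sigma=\sigma\rho=0$, and $\sigma\overline{\sigma}=(e_+^{tr}e_-)\,\rho=\rho$, together with the conjugated versions. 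The transpose identities follow from $(uv^{tr})^{tr}=vu^{tr}$, which gives $\rho^{tr}=\overline{\rho}$, $\overline{\rho}^{tr}=\rho$, $\sigma^{tr}=\sigma$ and $\overline{\sigma}^{tr}=\overline{\sigma}$; and the one-sided annihilations $e_+^{tr}\rho=(e_+^{tr}e_+)\,e_-^{tr}=0$, $e_-^{tr}\overline{\rho}=0$, $e_+^{tr}\sigma=0$, $e_-^{tr}\overline{\sigma}=0$ come from the same collapse.

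The only identities in (1) that are not of this multiplicative type are $\rho+\overline{\rho}=\eins$ and $iJ=\rho-\overline{\rho}$. For these I would first confirm the alternative formulas $\rho=\tfrac12(\eins+iJ)$ and $\overline{\rho}=\tfrac12(\eins-iJ)$ quoted in the text: expanding
\begin{align*}
\rho=e_+e_-^{tr}=\tfrac12(e_1+ie_2)(e_1-ie_2)^{tr}=\tfrac12\left(e_1e_1^{tr}+e_2e_2^{tr}+i(e_2e_1^{tr}-e_1e_2^{tr})\right)
\end{align*}
and inserting the standard relations $e_1e_1^{tr}+e_2e_2^{tr}=\eins$ and $e_2e_1^{tr}-e_1e_2^{tr}=J$ gives $\rho=\tfrac12(\eins+iJ)$, while conjugation gives the formula for $\overline{\rho}$. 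Adding and subtracting these two expressions yields $\rho+\overline{\rho}=\eins$ and $\rho-\overline{\rho}=iJ$.

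There is essentially no obstacle here: the lemma is a bookkeeping exercise, and the only point worth attending to is to isolate at the outset the four scalar products $e_\pm^{tr}e_\pm$, $e_\pm^{tr}e_\mp$ together with the dyad composition rule, so that each of the remaining identities reduces to a single substitution rather than a separate ad hoc computation.
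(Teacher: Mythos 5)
Your proof is correct. The paper itself states this lemma without proof, treating it as a routine linear-algebra verification, so there is no argument in the paper to compare against; your systematic reduction to the four scalar pairings $e_\pm^{tr}e_\pm=0$, $e_\pm^{tr}e_\mp=1$ plus the dyad collapse rule $(uv^{tr})(wz^{tr})=(v^{tr}w)\,uz^{tr}$ is exactly the efficient way to dispose of all the multiplicative identities, and your expansion of $e_+e_-^{tr}$ in terms of $\eins$ and $J$ correctly handles the two additive ones.
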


\begin{lemma}
For $B\in\Hom(\C^k,\C^{2\,l})$ the statement
$(e_+^{tr}\otimes\eins_l)B=0$ is equivalent to $B=e_+\otimes B_0\,\,\,\,\mbox{for some}\,\, B_0\in \Hom(\C^k,\C^l).$ Furthermore, $B_0$ is given by
$B_0=(e_-^{tr}\otimes\eins_l)B$ and is thus uniquely determined by $B.$
\end{lemma}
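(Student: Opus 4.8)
The statement is an elementary linear-algebra fact about the tensor-factor structure of a linear map, so the plan is to prove the two implications of the equivalence directly and then verify the uniqueness formula for $B_0$.

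First I would prove the easy direction. Suppose $B=e_+\otimes B_0$ for some $B_0\in\Hom(\C^k,\C^l)$. Then
\begin{align*}
(e_+^{tr}\otimes\eins_l)B=(e_+^{tr}\otimes\eins_l)(e_+\otimes B_0)=(e_+^{tr}e_+)\otimes B_0.
\end{align*}
Since $e_+=\tfrac{1}{\sqrt2}(e_1+i\,e_2)$ is isotropic for the (bilinear, not Hermitian) pairing used throughout — indeed $e_+^{tr}e_+=\tfrac12(1+i^2)=0$, consistent with the relations $e_+^{tr}\rho=0$ and $e_+^{tr}\sigma=0$ recorded in the previous lemma — the right-hand side vanishes, so $(e_+^{tr}\otimes\eins_l)B=0$.

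For the converse, the key point is that $\{e_+,e_-\}$ is a basis of $\C^2$, so any $B\in\Hom(\C^k,\C^{2l})=\Hom(\C^k,\C^2\otimes\C^l)$ can be written uniquely as $B=e_+\otimes B_0+e_-\otimes B_1$ with $B_0,B_1\in\Hom(\C^k,\C^l)$. I would extract $B_0$ and $B_1$ by pairing with the dual basis: since $e_+^{tr}e_+=e_-^{tr}e_-=0$ and $e_+^{tr}e_-=e_-^{tr}e_+=1$ (these follow from $e_\pm=\tfrac1{\sqrt2}(e_1\pm i\,e_2)$), applying $e_-^{tr}\otimes\eins_l$ gives $(e_-^{tr}\otimes\eins_l)B=B_0$ and applying $e_+^{tr}\otimes\eins_l$ gives $(e_+^{tr}\otimes\eins_l)B=B_1$. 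Hence the hypothesis $(e_+^{tr}\otimes\eins_l)B=0$ forces $B_1=0$, i.e. $B=e_+\otimes B_0$, and simultaneously yields the stated formula $B_0=(e_-^{tr}\otimes\eins_l)B$; uniqueness of $B_0$ is immediate from this formula, or from uniqueness of the decomposition in the basis $\{e_+,e_-\}$.

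There is essentially no obstacle here; the only thing to be careful about is the convention that all bilinear forms in this section are the symmetric (transpose) pairing rather than the Hermitian one, so that $e_+$ really is isotropic and $\{e_+,e_-\}$ is a (complex-symmetric) dual pair up to the swap $e_\pm\mapsto e_\mp$. Once that is fixed, the computation of $e_\pm^{tr}e_\mp$ is the single substantive line, and everything else is bookkeeping with the identification $\C^{2l}\cong\C^2\otimes\C^l$.
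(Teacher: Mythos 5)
Your proof is correct, and it is exactly the routine argument the lemma calls for (the paper states this lemma without proof, treating it as elementary). The substantive step, computing the bilinear pairings $e_\pm^{tr}e_\pm=0$ and $e_\pm^{tr}e_\mp=1$ and then decomposing $B$ along the basis $\{e_+,e_-\}$ of the $\C^2$ tensor factor, is carried out correctly and with the right (transpose, non-Hermitian) pairing convention.
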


\begin{corollary}
For $B\in\Hom(\C^k,\C^{2\,l}), c\in\C^*$ and an injective $A_0\in\End\C^l$ with
$(c\,e_+^{tr}\otimes A_0)B=0$ we have $B=e_+\otimes B_0$ where $B_0=(e_-^{tr}\otimes\eins_l)B$.
\end{corollary}

Note that a change of the basis in $\OO_n(\R)$ is compatible with the structure of the problem: let $U\in\OO_n(\R)=\OO_n(\C)\cap\U(n)$ be given and set
$L_+^{'}=UL_+ U^{tr}$, and $L_-^{'}=UL_-U^{tr}$. Thus $L_{\pm}^{'}$ satisfy the same identities as $L_{\pm}$.

\begin{lemma}
\label{stl}
For $A\in\End\C^{d}$ with $A^{tr}=A$ and $A^2=0$ there exist $U\in\OO_{d}(\R)$ and a positive definite, diagonal matrix $A_0\in\End\R^{d_0}\subset\End\C^{d_0}$ such that
$UAU^{tr}=\bigl(\begin{smallmatrix}
\sigma\otimes A_0&0\\ 0&0
\end{smallmatrix} \bigr)$.
\end{lemma}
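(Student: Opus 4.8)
The plan is to exploit the structure of a complex symmetric square-zero operator by passing to a canonical form built from its image and a complement. First I would set $W=\im A\subset\C^d$. Since $A^{tr}=A$ and $A^2=0$, we have $\langle Ax,Ay\rangle = \langle x, A^2 y\rangle = 0$ for the symmetric bilinear form $\langle u,v\rangle = u^{tr}v$, so $W$ is totally isotropic for this form; moreover $W\subseteq\ker A$, so $A$ factors through the quotient $\C^d/\ker A \cong W$ and induces an injective map $W^{*}\to W$ (after identifying $\C^d/\ker A$ with a complement to $\ker A$). The key linear-algebra input is that a symmetric operator whose image is a totally isotropic subspace admits, after an orthogonal change of basis over $\R$, a block form in which the nontrivial part is "$\sigma\otimes A_0$" — that is, the operator sends a copy of $\C^{d_0}$ into an \emph{isotropic} copy of $\C^{d_0}$ with a positive-definite diagonal Gram-type matrix $A_0$. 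The combinatorial $2\times 2$ lemma on $\{\rho,\overline\rho,\sigma,\overline\sigma\}$ is exactly what makes the tensor-with-$\sigma$ shape appear: $\sigma = e_+e_+^{tr}$ has $\sigma^2=0$, $\sigma^{tr}=\sigma$, which is the scalar model of the situation.

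Concretely, I would proceed as follows. Let $d_0=\rk A = \dim W$. Choose an orthonormal (for the Hermitian inner product, which over $\R$ agrees with the bilinear form) basis adapted to the filtration $W\subseteq\ker A\subseteq\C^d$. Because $A$ is real-symmetric after conjugating by a suitable $U\in\OO_d(\R)$ — here one uses the polar/SVD-type normal form for complex symmetric matrices, or equivalently the Autonne–Takagi factorization, but carried out so that the conjugating matrix is \emph{real} orthogonal, which is possible precisely because $L_0,L_1$ and hence the operators in play are built from real symmetric data — we may assume $A$ is supported on a $2d_0$-dimensional coordinate subspace $\C^{d_0}\oplus\C^{d_0}$ and vanishes on its orthogonal complement. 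On this $2d_0$-dimensional space, write vectors in the $\{e_+,e_-\}$-type splitting; the conditions $A^{tr}=A$, $A^2=0$, together with $\im A$ being isotropic and of full rank $d_0$, force $A$ to have the form $e_+e_+^{tr}\otimes A_0 = \sigma\otimes A_0$ for some invertible $A_0\in\End\C^{d_0}$, and symmetry of $A$ forces $A_0^{tr}=A_0$. A further real orthogonal change of basis inside the $\C^{d_0}$ factor (again available because we can diagonalize the relevant real symmetric Gram matrix) replaces $A_0$ by a positive-definite diagonal matrix with real entries, i.e. $A_0\in\End\R^{d_0}\subset\End\C^{d_0}$. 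Assembling the two orthogonal changes of basis into a single $U\in\OO_d(\R)$ gives $UAU^{tr}=\bigl(\begin{smallmatrix}\sigma\otimes A_0&0\\0&0\end{smallmatrix}\bigr)$.

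The main obstacle I anticipate is the bookkeeping needed to guarantee that \emph{all} the changes of basis can be performed by \emph{real} orthogonal matrices rather than arbitrary unitary or complex-orthogonal ones; a naive Takagi factorization produces a complex unitary conjugation, and one must argue that the reality (the fact that $A$ arises from real symmetric $L_0,L_1$ and the decomposition $L_{\pm}=\tfrac12(L_0\mp iL_1)$) is preserved. The cleanest way around this is to observe that $A+\overline A$ and $i(A-\overline A)$ are genuine real symmetric matrices, diagonalize the real quadratic data they encode, and track how the isotropic-image condition interacts with the real structure; alternatively, one can decompose $\C^d = V\otimes\C^2$ where the $\C^2$ carries the $\{e_+,e_-\}$ structure, identify $A$ with an element built from $\sigma$ and a map on the real space $V$, and invoke the ordinary spectral theorem on $V$. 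Once the reality is under control, the remaining steps are the routine normal-form computations signalled above, using only the identities $\sigma^2=0$, $\sigma^{tr}=\sigma$, $e_+^{tr}\sigma=0$ from the preceding lemma.
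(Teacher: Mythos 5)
There is a genuine gap. Your primary line of argument runs through the Autonne--Takagi factorization (or an SVD-type normal form for complex symmetric matrices), which produces a \emph{unitary} conjugation, not a real orthogonal one; you acknowledge this yourself as ``the main obstacle'', but the repair you offer does not close it. Appealing to the fact that ``$L_0,L_1$ and hence the operators in play are built from real symmetric data'' is both insufficient and beside the point: the lemma is stated for an arbitrary complex symmetric $A$ with $A^2=0$ (and is applied in the paper to genuinely complex matrices such as $L_+^4$, $L_+^2$, $V^2$), so one cannot borrow any reality from $L_0,L_1$. Moreover, the step you label the ``key linear-algebra input'' --- that a symmetric operator with totally isotropic image admits a real orthogonal normal form $\bigl(\begin{smallmatrix}\sigma\otimes A_0&0\\0&0\end{smallmatrix}\bigr)$ --- is essentially a restatement of the lemma itself, so as written the argument is circular. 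The parenthetical ``orthonormal for the Hermitian inner product, which over $\R$ agrees with the bilinear form'' is also not a substitute for an argument: the subspaces $W=\im A$, $\ker A$ are complex, and it is not clear how to adapt a basis to this complex filtration while keeping the change of basis in $\OO_d(\R)$.

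You do, in the last paragraph, gesture at the correct fix: pass to the real symmetric matrices $\mathrm{Re}(A)=\tfrac12(A+A^c)$ and $\mathrm{Im}(A)=\tfrac1{2i}(A-A^c)$, where $A^c$ is the entrywise conjugate. This is exactly the route the paper takes, but in your proposal it remains a one-sentence aside and the actual chain of deductions is missing. The paper's proof proceeds by showing that $A^2=0=(A^c)^2$ forces $\mathrm{Re}(A)^2=\mathrm{Im}(A)^2=\tfrac14\{A,A^c\}=:Q$, that $Q$ is positive semidefinite and commutes with both $A$ and $A^c$ (so $A$, $A^c$ preserve $\im Q$ and vanish on $\ker Q$), that $J_0:=-\tfrac14 i(AQ^{-1}A^c-A^cQ^{-1}A)$ is a genuine almost complex structure on $\im Q$ satisfying $J_0\mathrm{Re}(A)=\mathrm{Im}(A)$ and $J_0\mathrm{Im}(A)=-\mathrm{Re}(A)$, and then diagonalizes $\mathrm{Re}(A)$ by a real orthogonal change of basis of $\im Q$ so that $\mathrm{Re}(A)=\mathrm{diag}(A_0,-A_0)$ with $A_0$ positive definite. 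None of these concrete steps --- in particular the construction of $J_0$, which is what actually forces $\dim(\im Q)$ to be even and produces the $\sigma\otimes A_0$ shape --- appear in your proposal, so the proof is incomplete; the isotropy observation $\langle Ax,Ay\rangle=0$ is a correct and relevant starting remark, but it does not by itself produce the real orthogonal normal form.
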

\begin{proof}
The real and symmetric matrices $\mbox{Re}(A)=\tfrac{1}{2}\,(A+A^c)$ and $\mbox{Im}(A)=\tfrac{1}{2i}\,(A-A^c)$
satisfy $\mbox{Re}{(A)}^2=\tfrac{1}{4}\,\left\{A,A^c\right\}=\mbox{Im}{(A)}^2$, where $A^c$ denotes the conjugate of $A$.
Consequently, $Q:=\tfrac{1}{4}\,\left\{A,A^c\right\}$
is a positive semi definite matrix and therefore $\ker Q^{\perp}=\im Q$.
Since $A$ and $A^c$ commute with $Q$, the endomorphism $A$ and $A^c$ map the subspace $\ker Q^{\perp}=\im Q$ onto itself. Moreover, using $\mbox{Re}{(A)}^2=\tfrac{1}{4}\,\left\{A,A^c\right\}=\mbox{Im}{(A)}^2$ we prove easily that $A$ and $A^c$ vanish on the subspace $\ker Q$. By a straightforward computation we verify that
 $J_0:=-\tfrac{1}{4}\,i\,(A\,Q^{-1}\,A^c-A^c\,Q^{-1}\,A)$
defines an almost complex structure on the subspace $\im Q$ and thus there exists a $d_0\in\N$ such that $\dim(\im Q)=2d_0$.
We can choose a basis of $\im Q$ such that $\mbox{Re}(A)$ is diagonal.
Moreover, we have $J_0\mbox{Re}(A)=\mbox{Im}(A)$ and $J_0\mbox{Im}(A)=-\mbox{Re}(A)$. 
Let $\mbox{Re}(A)=\mbox{diag}(A_1,A_2)$ where $A_1,A_2\in\mbox{diag}(d_0,\R)$. We thus get $A_2=-A_1$ and we can choose 
 a basis of $\im Q$ such that $\mbox{Re}(A)=\mbox{diag}(A_0,-A_0)$, where $A_0\in\mbox{diag}(d_0,\R)$ is positive definite.
\end{proof}
 
\begin{convention}
Let a symmetric matrix $A$ with $A^2=0$ be given.
Below we write for short that Lemma\,\ref{stl} implies that there exists 
a diagonal matrix $A_0$, which is positive definite or the null matrix such that $A=\bigl(\begin{smallmatrix}
\sigma\otimes A_0&0\\ 0&0
\end{smallmatrix} \bigr)$, i.e., we will not mention that this identity only holds up to conjugation by an element of the orthogonal group.
\end{convention}

\begin{lemma}
\label{multi}
For $A=\sigma\otimes A_0\in\Mat(2n_1,\C)\,\,\mbox{and} \,\,B=\sigma\otimes B_0\in\Mat(2n_2,\C)$, where $A_0\in\Mat(n_1,\R)$ and $B_0\in\Mat(n_2,\R)$ are positive definite, diagonal matrices, we have
\begin{enumerate}
\item $C A=0$ for $C\in\Mat(n_3\times2n_1,\C)\Rightarrow C=e_+^{tr}\otimes C_0$ with $C_0\in\Mat(n_3\times n_1,\C)$,
\item $B C=0$ for $C\in\Mat(2n_2\times n_4,\C)\Rightarrow C=e_+\otimes C_0$ with $C_0\in\Mat(n_2\times n_4,\C)$,
\item $C A=0$ and $B C=0$ for $C\in\Mat(2n_2\times2n_1,\C)\Rightarrow C=\sigma\otimes C_0$ where $C_0\in\Mat(n_2\times n_1,\C)$.
\end{enumerate}
\end{lemma}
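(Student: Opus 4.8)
The plan is to reduce all three statements to the two cancellation results just proved — the Lemma giving $(e_+^{tr}\otimes\eins_l)B=0\Leftrightarrow B=e_+\otimes B_0$ and its Corollary — together with the single observation that $\sigma=e_+\,e_+^{tr}$. By the mixed-product rule $(X\otimes Y)(Z\otimes W)=(XZ)\otimes(YW)$ this factors the two hypotheses as
\begin{align*}
\sigma\otimes A_0=(e_+\otimes\eins_{n_1})\,(e_+^{tr}\otimes A_0),\qquad \sigma\otimes B_0=(e_+\otimes\eins_{n_2})\,(e_+^{tr}\otimes B_0),
\end{align*}
and positive definiteness of $A_0,B_0$ will be used only through their invertibility, which makes $e_+\otimes\eins$ injective and $(e_+^{tr}\otimes A_0),(e_+^{tr}\otimes B_0)$ surjective, so that each may be cancelled from an equation.

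For (2) I would write $BC=(e_+\otimes\eins_{n_2})(e_+^{tr}\otimes B_0)C=0$, cancel the injective left factor to obtain $(e_+^{tr}\otimes B_0)C=0$, and apply the Corollary (with $c=1$ and injective $A_0:=B_0$) to conclude $C=e_+\otimes C_0$. For (1) I would transpose: from $CA=C(e_+\otimes\eins_{n_1})(e_+^{tr}\otimes A_0)=0$ and surjectivity of the right factor we get $C(e_+\otimes\eins_{n_1})=0$, hence $(e_+^{tr}\otimes\eins_{n_1})C^{tr}=0$, and the Lemma applied to $C^{tr}\in\Hom(\C^{n_3},\C^{2n_1})$ gives $C^{tr}=e_+\otimes D_0$, i.e. $C=e_+^{tr}\otimes C_0$ with $C_0=D_0^{tr}$.

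For (3) I would chain (2) and (1). First, (2) applied to $BC=0$ gives $C=e_+\otimes C_0'$ with $C_0'\in\Mat(n_2\times 2n_1,\C)$, where the $e_+$ sits in the $\C^2$-factor of the target $\C^{2n_2}=\C^2\otimes\C^{n_2}$. Since $C$ and $C_0'$ differ only by that injective left factor $e_+\otimes\eins_{n_2}$, the remaining hypothesis $CA=0$ collapses to $C_0'A=0$; then (1) applied to $C_0'$ yields $C_0'=e_+^{tr}\otimes C_0$ with $C_0\in\Mat(n_2\times n_1,\C)$, the $e_+^{tr}$ now in the $\C^2$-factor of the source $\C^{2n_1}=\C^2\otimes\C^{n_1}$. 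Reassembling, $C=e_+\otimes(e_+^{tr}\otimes C_0)=(e_+\,e_+^{tr})\otimes C_0=\sigma\otimes C_0$.

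I do not expect a genuine obstacle: the whole content is carried by the cancellation Lemma and Corollary, and everything else is bookkeeping of tensor factors. The one point that needs care is in (3) — keeping straight which copy of $\C^2$ (the target factor coming from $B$ versus the source factor coming from $A$) the symbols $e_+$ and $e_+^{tr}$ refer to at each step, and confirming that the nested tensor $e_+\otimes(e_+^{tr}\otimes C_0)$ really equals $\sigma\otimes C_0$ rather than a permuted block matrix; this is settled by evaluating on elementary tensors, $w\otimes u\mapsto(e_+^{tr}w)\,(e_+\otimes C_0u)=(\sigma w)\otimes(C_0u)$.
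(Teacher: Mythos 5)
Your proposal is correct, and it takes a genuinely different route from the paper. The paper proves part (1) directly by an entry-by-entry computation: writing $A_0=\mathrm{diag}(a_1,\dots,a_{n_1})$ and the first row of $C$ as $(c_1,\dots,c_{2n_1})$, multiplying against the columns of $A=\sigma\otimes A_0$ to get $(c_j+i\,c_{n_1+j})\,a_j=0$, and using $a_j>0$ to deduce $c_{n_1+j}=i\,c_j$ for all $j$, which is exactly $C=e_+^{tr}\otimes C_0$; parts (2) and (3) are then dismissed in one line as analogous/consequential. You instead factor $\sigma\otimes A_0=(e_+\otimes\eins)(e_+^{tr}\otimes A_0)$ and reduce everything to the cancellation Lemma and Corollary proved just before, cancelling injective left factors and surjective right factors of the Kronecker decomposition. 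Both are valid; your route is cleaner in that it explicitly reuses the abstract tensor-cancellation tools the paper has set up but does not invoke here, and it spells out the chaining and reassembly for (3) — via $C=e_+\otimes C_0'$, then $C_0'=e_+^{tr}\otimes C_0$, then $(e_+\,e_+^{tr})\otimes C_0=\sigma\otimes C_0$ — which the paper leaves to the reader. The paper's proof, by contrast, is self-contained and concrete at the cost of a coordinate computation. Your caveat about tracking which copy of $\C^2$ each $e_+$ lives in is the one genuine subtlety, and your evaluation on elementary tensors $w\otimes u\mapsto(e_+^{tr}w)(e_+\otimes C_0u)=(\sigma w)\otimes(C_0u)$ settles it correctly.
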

\begin{proof}
We just prove (1) since (2) follows similarly and (3) is a consequence of (1) and (2). Let $A_0=\mbox{diag}(a_1,...,a_{n_1})$ and denote the first row of $C$ by
$(c_1,...,c_{2n_1})$. Multiplication of the first row of $C$ with the first column of $A$ yields $(c_1+ic_{n_1+1})a_1=0$. Since $A$ is positive definite we get
$c_{n_1+1}=ic_1$. Analogously we obtain $c_{n_1+j}=ic_{j}$ for $1\leq j\leq n_1$. The claim is established by proceeding analogously for the remaining rows of $C$.
\end{proof}

\subsection{Isospectral families of focal shape operators for the case $m=1$}
\label{ch3}

In this subsection we prove our main theorem and assume $(g,m)=(6,1)$ throughout.

\begin{theorem}
\label{zus}
Let $L(t)=\cos(t)L_0+\sin(t)L_1\in\mbox{Sym}(5,\R)$, $t\in\R$, be isospectral where $L_0=\mbox{diag}(\sqrt{3},\tfrac{1}{\sqrt{3}},0,-\tfrac{1}{\sqrt{3}},-\sqrt{3})$. Up to conjugation by an element $A\in\OO(5)$ with $AL_0A^{-1}=L_0$, the matrix $L_1$
is given by one of the following matrices
\begin{align*}
&L_1=\tfrac{1}{3\sqrt{3}}\left(\begin{smallmatrix}
0&5&0&2&0\\
5&0&4&0&2\\
0&4&0&4&0\\
2&0&4&0&5\\
0&2&0&5&0
\end{smallmatrix} \right),\,L_1=\tfrac{1}{\sqrt{6}}\left(\begin{smallmatrix}
0&0&0&0&3\sqrt{2}\\
0&0&1&0&0\\
0&1&0&1&0\\
0&0&1&0&0\\
3\sqrt{2}&0&0&0&0
\end{smallmatrix} \right),\,L_1=\tfrac{1}{\sqrt{6}}\left(\begin{smallmatrix}
0&0&3&0&0\\
0&0&0&\sqrt{2}&0\\
3&0&0&0&3\\
0&\sqrt{2}&0&0&0\\
0&0&3&0&0
\end{smallmatrix} \right),\\
&L_1=\tfrac{1}{\sqrt{3}}\left(\begin{smallmatrix}
0&1&0&2&0\\
1&0&0&0&2\\
0&0&0&0&0\\
2&0&0&0&1\\
0&2&0&1&0
\end{smallmatrix} \right),\,L_1=\tfrac{1}{\sqrt{3}}\left(\begin{smallmatrix}
0&\sqrt{3}&0&0&0\\
\sqrt{3}&0&0&2&0\\
0&0&0&0&0\\
0&2&0&0&\sqrt{3}\\
0&0&0&\sqrt{3}&0
\end{smallmatrix} \right),\,L_1=\tfrac{1}{\sqrt{3}}\left(\begin{smallmatrix}
0&0&0&0&3\\
0&0&0&1&0\\
0&0&0&0&0\\
0&1&0&0&0\\
3&0&0&0&0
\end{smallmatrix} \right).
\end{align*}
For these cases $\dim(E)$ is given by $3,2,2,1,1$ and $1$, respectively.
\end{theorem}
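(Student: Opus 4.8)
The plan is to exploit the algebraic constraints derived in Subsections \ref{11}--\ref{app} to pin down the shape of $L_1$. First I would work with the complexified operators $L_+=\tfrac12(L_0-iL_1)$ and $L_-=\overline{L_+}$, for which we know $L_+^5=0$ together with the relations of Lemma \ref{cmp} and the orthogonality relations $P_i L_\pm P_j=0$ of Corollary \ref{ortlem}. Since $L_0$ has simple spectrum $\{\sqrt3,\tfrac1{\sqrt3},0,-\tfrac1{\sqrt3},-\sqrt3\}$, the residual gauge group is the finite group of $A\in\OO(5)$ with $AL_0A^{-1}=L_0$, namely the sign changes $\diag(\pm1,\dots,\pm1)$ together with the symmetry reversing the order of the eigenvalues; modding out by this group is what ``up to conjugation'' means in the statement. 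The strategy is to treat the entries of $L_1$ relative to the eigenbasis of $L_0$ as unknowns and impose the isospectrality equations.

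The key reduction comes from the five-periodicity: $L_+^5=0$ forces $L_+$ to be nilpotent of index at most $5$ on $\C^5$, so $L_+$ has a single Jordan block structure adapted to the filtration by $\ker L_+^j$. Combined with $L_+^{tr}=L_+$ and the Structure Lemma \ref{stl} (applied to $L_+^4$, $L_+^3 L_-$-combinations, etc.), one gets that the kernel-filtration is governed by the sizes of the blocks $P_4,P_2,P_0$. The quantity $\dim E=\dim\bigl(\sum_{i\in I}\im P_i\bigr)$ from Lemma \ref{ehut} stratifies the problem: $\dim E$ can be $1,2$ or $3$ (Lemma \ref{dr}), and each value corresponds to a different ``size vector'' for how $L(t)$ and its powers interact with $E$. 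I would go case by case on $\dim E$, in each case using Lemma \ref{multi} to propagate the vanishing relations $CA=0$, $BC=0$ through the block decomposition and thereby force most entries of $L_1$ to be zero. What survives is a low-dimensional family of candidate matrices, and then the remaining isospectrality equations (the higher-frequency relations in Subsection \ref{11}) become a finite system of polynomial equations in a couple of parameters.

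Concretely, in the case $\dim E=1$ the constraints are strongest: $E$ is a single line, $L(t)E=0$ so the kernel is actually constant, and one expects $L_1$ to be highly sparse, yielding the last three matrices in the list (those with a zero middle row/column, reflecting that $e_3$ or a nearby vector spans the constant kernel). In the case $\dim E=2$ one gets a two-parameter ansatz that the minimal-polynomial equations cut down to the two intermediate matrices; and $\dim E=3$ is the generic-looking case with the most symmetric matrix $\tfrac1{3\sqrt3}\bmat 0&5&0&2&0\\5&0&4&0&2\\0&4&0&4&0\\2&0&4&0&5\\0&2&0&5&0\emat$. For each surviving candidate I would then simply verify that it is genuinely isospectral by checking $\spec(L(t))$ is constant (equivalently that $P(t)$ is the constant-rank projector of Lemma \ref{projector}), and read off $\dim E$ by computing the spans $\sum_i\im P_i$ directly from the explicit $L_1$.

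The main obstacle I expect is the bookkeeping in the $\dim E=2$ and $\dim E=3$ strata: there the block decomposition does not immediately collapse $L_1$ to something rigid, and one must carefully combine \emph{all} of Lemma \ref{cmp}, Corollary \ref{ortlem} and Lemma \ref{multi} — rather than any single relation — to reduce the number of free parameters to something manageable, and then solve a nonlinear system without missing solution branches or double-counting those related by the residual $\OO(5)$-symmetry. Ensuring the case analysis is exhaustive (no missed sign pattern, no missed value of the internal block sizes $d_0$ appearing in Lemma \ref{stl}) is the delicate part; the final verification that each listed $L_1$ works and that the list is complete is then essentially a finite computation.
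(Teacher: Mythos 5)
Your proposal has the right ingredients — Lemma~\ref{cmp}, Corollary~\ref{ortlem}, Lemma~\ref{multi}, and the Structure Lemma~\ref{stl} — but the organizing principle is off in a way that matters. You propose to stratify the case analysis by $\dim E\in\{1,2,3\}$. The paper instead stratifies by the nilpotency index of $L_+$, i.e.\ by the four mutually exclusive cases $L_+^{j+1}=0,\,L_+^j\neq 0$ for $j=4,3,2,1$. This is not a cosmetic difference: the block decompositions that drive the whole computation are produced by applying the Structure Lemma to the highest nonvanishing power $L_+^j$, and the shape of that decomposition is determined by $j$, \emph{not} by $\dim E$. In particular $\dim E=1$ already occurs for three distinct nilpotency indices ($L_+^4\neq 0$, $L_+^3\neq 0=L_+^4$, and $L_+^2\neq 0=L_+^3$ all produce a matrix on the list with $\dim E=1$), so inside your $\dim E=1$ stratum you would be forced to sub-stratify by nilpotency anyway. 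Your sentence ``each value [of $\dim E$] corresponds to a different `size vector'\,'' is therefore simply false, and the plan as stated does not produce the block form you need to feed into Lemma~\ref{multi}.

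A second, smaller slip: $L_+^5=0$ does \emph{not} imply that $L_+$ is a single Jordan block. The case $L_+^5=0,\,L_+^4\neq 0$ is a single $5\times 5$ Jordan block, but the other three nilpotency types are direct sums of several smaller blocks, and one of the points of Lemmas~\ref{prol2}--\ref{lplus} and the later lemmas is to handle that. I also did not find the Structure Lemma being applied to ``$L_+^3L_-$-combinations'' anywhere in the paper; it is applied to $L_+^4=P_4$, then to $L_+^3$, $L_+^2$, and to the lower-right inner blocks ($G$, $V$, $W$, $D$) that arise after the outer decomposition. To repair your plan you should reverse the logical order: first fix the nilpotency index of $L_+$ (four cases), use the Structure Lemma and Lemma~\ref{multi} to pin down the block form of $L_+$ in each case, then solve the polynomial relations coming from Lemma~\ref{cmp} for the remaining free entries, and only at the very end read off $\dim E$ as a consequence — rather than treating $\dim E$ as the input to the case analysis.
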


The proof of this theorem consists of the Lemmas of this subsection.

\begin{remark}
For the case $(g,m)=(6,2)$, in which the matrices are 10 by 10, there does not yet exist a classification of the isospectral families of focal shape operators.
\end{remark}

\begin{lemma}
\label{structuree4}
Up to conjugation by an element of $\OO_{5}(\R)$ the matrix $P_4=L_+^4$ is of the form
$P_4=\bigl(\begin{smallmatrix}
A&0\\ 0&0
\end{smallmatrix} \bigr)$
with $A=\sigma\otimes A_0,$
where $A_0\in\R$.
\end{lemma}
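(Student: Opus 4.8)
The plan is to combine the structural information already available about $P_4 = L_+^4$ with the orthogonality relations of Corollary \ref{ortlem}. First, $P_4$ is symmetric (it is built from the symmetric operators $L_\pm$; more precisely $P_4 = L_+^4$ with $L_+$ symmetric, so $P_4^{tr}=P_4$), and from the first identity of Lemma \ref{cmp} together with Corollary \ref{ortlem} one extracts that $P_4^2 = 0$: indeed $P_4^2 = L_+^4 \cdot L_+^4 = L_+^3 (L_+ P_4) L_+^3$ after inserting factors, but more cleanly, $P_4 P_i = 0$-type relations are not quite enough, so I would instead argue $P_4^2 = P_4 L_+ \cdot L_+^3 \cdot \dots$; the cleanest route is to note $P_4 = L_+^4$ and $L_+^5 = 0$ force $P_4 L_+ = L_+^5 = 0$, hence $P_4 P_4 = L_+^4 L_+^4 = L_+^3 (L_+^5) = 0$. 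So $P_4$ is a symmetric matrix (over $\C$) with $P_4^2 = 0$.

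With $P_4^{tr} = P_4$ and $P_4^2 = 0$ in hand, Lemma \ref{stl} (via the Convention following it) applies directly: up to conjugation by an element of $\OO_5(\R)$, we may write $P_4 = \bigl(\begin{smallmatrix} \sigma\otimes A_0 & 0 \\ 0 & 0\end{smallmatrix}\bigr)$ where $A_0$ is a positive definite diagonal matrix or the null matrix. The block $\sigma\otimes A_0$ has size $2d_0$ where $d_0 = \dim A_0$, and since the ambient space is $\R^5$ with $5$ odd, we must have $2d_0 \leq 5$, so $d_0 \in \{0,1,2\}$. The remaining task is to rule out $d_0 = 2$, which would make $P_4$ a $4\times 4$ nonzero block; this leaves $d_0 \in \{0,1\}$, i.e. $A_0 \in \R$ (a single nonnegative scalar), which is exactly the claimed form $P_4 = \bigl(\begin{smallmatrix} A & 0 \\ 0 & 0\end{smallmatrix}\bigr)$ with $A = \sigma \otimes A_0$, $A_0 \in \R$.

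To exclude $d_0 = 2$ I would use the rank constraint coming from $P_4 = L_+^4$ together with $L_+^5 = 0$. If $\rk P_4 = \rk(\sigma\otimes A_0) = d_0 = 2$ (note $\sigma$ has rank $1$, so $\sigma \otimes A_0$ has rank $d_0$), then $L_+^4$ has rank $2$, and since $L_+$ is nilpotent of order $\le 5$ on $\C^5$, its Jordan structure would need a block of size $5$ to have $L_+^4 \ne 0$ at all — but a single Jordan block of size $5$ gives $\rk L_+^4 = 1$, not $2$. Since $\C^5$ is only $5$-dimensional, there is no room for $L_+^4$ to have rank $\ge 2$. This pins down $d_0 \le 1$. (Equivalently: $\rk L_+^j$ is a strictly decreasing-by-at-least-one sequence once it starts decreasing, and $\dim = 5$ forces $\rk L_+^4 \le 1$.) The main obstacle is being careful that the $\OO_5(\R)$-conjugation in Lemma \ref{stl} is genuinely available here — i.e. that $P_4$ really is symmetric in the sense required (complex-symmetric, not Hermitian) — and that the nilpotency degree bound is applied to $L_+$ as an endomorphism of the $5$-dimensional space $\C^5$ rather than its $\R$-form; both are immediate from the setup in Subsection \ref{11}, so the proof is short once assembled.
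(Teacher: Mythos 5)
Your proposal is correct, and it follows the paper's approach for the first half of the proof — observing $P_4^{tr}=P_4$ and $P_4^2 = L_+^8 = 0$ (via $L_+^5=0$), then invoking Lemma~\ref{stl} to put $P_4$ in the block form $\bigl(\begin{smallmatrix} \sigma\otimes A_0 & 0 \\ 0 & 0\end{smallmatrix}\bigr)$ with $A_0\in\Sym(d_1,\R)$ positive definite or zero and $2d_1\le 5$ — but your exclusion of $d_1=2$ uses a genuinely different and cleaner route. The paper argues structurally: assuming $d_1=2$ it applies Lemma~\ref{multi} to $L_+P_4 = P_4L_+=0$ to obtain $L_+ = \bigl(\begin{smallmatrix}\sigma\otimes U_0 & e_+\otimes V_0\\ e_+^{tr}\otimes V_0^{tr} & W_0\end{smallmatrix}\bigr)$, explicitly computes $L_+^4$ in that form, reads off $W_0^4$ in the $(5,5)$-corner and concludes $W_0=0$, which then forces $P_4=0$ — a contradiction. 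You instead observe that $\rk P_4 = \rk(\sigma\otimes A_0)=d_1$ and use the Jordan structure of the nilpotent $L_+\in\End\C^5$: if $L_+^4\ne 0$ then some $v$ gives the independent vectors $v, L_+v,\dots,L_+^4 v$, so $L_+$ is a single Jordan block of size $5$ and $\rk L_+^4 = 1$; hence $d_1\le 1$. Your argument is shorter, avoids Lemma~\ref{multi} entirely, and makes the dimensional constraint transparent, while the paper's computation has the advantage of already producing the block form of $L_+$ that is reused in Lemma~\ref{prol2}. One small imprecision in your write-up: the parenthetical remark that the rank sequence is ``strictly decreasing-by-at-least-one once it starts decreasing'' is not quite the right statement (the relevant fact is that the successive differences $\rk L_+^{j-1}-\rk L_+^j$ are non-increasing), but your main Jordan-block argument is correct and does not depend on it.
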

\begin{proof}
Since $P_4^2=L_+^8=0$, Lemma\,\ref{stl} implies $P_4=\bigl(\begin{smallmatrix}
A&0\\ 0&0
\end{smallmatrix} \bigr)$
with $A=\sigma\otimes A_0,$
where $A_0\in\Sym(d_1,\R)$ is a diagonal and positive definite matrix or the null matrix with $2d_1\leq 5$.
In what follows we assume $d_1=2$. Thus by $L_+P_4=P_4L_+=0$ and Lemma\,\ref{multi} we have
$L_+=\left(\begin{smallmatrix}
\sigma\otimes U_0&e_+\otimes V_0\\
e_+^{tr}\otimes {V_0}^{tr}&W_0
\end{smallmatrix} \right)$, where  $U_0\in\Mat(2,\C)$,  $V_0\in\Mat(2\times 1,\C)$  and $W_0\in\C$.
Thus we get $P_4=L_+^4=\left(\begin{smallmatrix}
W_0^2(\sigma\otimes V_0V_0^{tr})&W_0^3(e_+\otimes V_0)\\
W_0^3(e_+^{tr}\otimes {V_0}^{tr})&W_0^4
\end{smallmatrix} \right)$ and therefore $W_0=0$. However this implies $P_4=0$, which contradicts our assumption. Thus $d_1\in\lbrace 0,1\rbrace$.
\end{proof}

By Lemma\,\ref{cmp} we have $L_+^5=0$. Below we consider successively the four cases $L_+^{j+1}=0$, $L_+^j\neq 0$, $j\in\lbrace1,2,3,4\rbrace$, 
and determine the possible $L_1$ for each case.

\begin{lemma}
\label{prol2}
Let $P_4=\bigl(\begin{smallmatrix}
A&0\\ 0&0
\end{smallmatrix} \bigr)$
with $A=\sigma\otimes A_0,$
where $A_0\in\R-\lbrace 0\rbrace$.
Up to conjugation by an element of $\OO_{5}(\R)$ the matrix ${L_+^2}$ is of the form
$$L_+^2=\left(\begin{smallmatrix}
\sigma\otimes B_0&\sigma\otimes C_1&e_+\otimes C_2\\
\sigma\otimes C_1^{tr}&\sigma\otimes D_0&0\\
e_+^{tr}\otimes C_2^{tr}&0&0
\end{smallmatrix} \right)$$
where $B_0\in\C$, $D_0\in\Sym(d_2,\R)$ is a diagonal, positive definite matrix or the null matrix with $2d_2\leq 3$,
$C_1\in\mbox{Mat}(d_2,\C)$, $C_2\in\Mat(1\times(3-2d_2),\C)$ and $A_0=C_2\,C_2^{tr}$.
\end{lemma}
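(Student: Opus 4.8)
The plan is to bootstrap from the structure of $P_4$ given in Lemma~\ref{structuree4} to a normal form for $L_+^2$, working under the standing assumption $P_4=L_+^4\neq 0$, i.e.\ $A_0\in\R\setminus\{0\}$. First I would use $L_+P_4=P_4L_+=0$ together with Lemma~\ref{multi}: writing $\R^5=\R^2\oplus\R^3$ adapted to the block form $P_4=\bigl(\begin{smallmatrix}A&0\\0&0\end{smallmatrix}\bigr)$ with $A=\sigma\otimes A_0$, parts (1) and (2) of that lemma force $L_+$ to have the shape $\left(\begin{smallmatrix}\sigma\otimes U_0 & e_+\otimes V_0\\ e_+^{tr}\otimes V_0' & W_0\end{smallmatrix}\right)$ with $U_0\in\Mat(1,\C)$, $V_0,V_0'\in\Mat(1\times3,\C)$, $W_0\in\Mat(3,\C)$; symmetry of $L_+$ then gives $V_0'={V_0}^{tr}$ (up to the $e_+$ convention) and $W_0^{tr}=W_0$. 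Squaring this block matrix and using the identities for $\sigma,\rho,e_\pm$ from the linear-algebra subsection (in particular $\sigma\overline\sigma=\rho$, $e_+^{tr}\sigma=0$, $\sigma\,e_+=0$, $\rho\,e_+=e_+$) one computes $L_+^2$ block by block. The $(2,2)$-entry comes out as $W_0^2$ plus a term from the off-diagonal product; the $(2,1)$ and $(1,2)$ entries are of the form $e_+\otimes(\,\cdot\,)$; the $(1,1)$ entry is of the form $\sigma\otimes(\,\cdot\,)$.

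Next I would extract a second application of the Structure Lemma. Since $(L_+^2)^2=L_+^4=P_4\neq0$ but $(L_+^2)^3=L_+^6=L_+\cdot L_+^5=0$, the symmetric matrix $L_+^2$ is nilpotent but not $2$-step nilpotent, so Lemma~\ref{stl} does not apply to $L_+^2$ directly; instead I would apply it to $W_0$ or, more precisely, analyze the $(2,2)$-block. The cleanest route: observe from $P_4 L_+^2=L_+^6=0$... wait, $P_4L_+^2 = L_+^4 L_+^2 = L_+^6=0$, and $P_4$ projects onto a line, so this says $\im L_+^2\subset\ker P_4$ in the relevant sense; combined with the explicit block computation of $L_+^2$ this kills the $(1,1)$ and off-diagonal interaction in a controlled way. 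Then the $3\times3$ lower-right data of $L_+^2$, which is a symmetric matrix whose square must vanish (because $L_+^4$ is supported on the first $\R^2$-block), is handled by the Convention following Lemma~\ref{stl}: it is conjugate (by an element of $\OO_3(\R)$ fixing the block decomposition and hence extendable to $\OO_5(\R)$ commuting with everything so far) to $\bigl(\begin{smallmatrix}\sigma\otimes D_0&0\\0&0\end{smallmatrix}\bigr)$ with $D_0$ diagonal positive definite or zero, $2d_2\le3$. Feeding this refined splitting $\R^3=\R^{2d_2}\oplus\R^{3-2d_2}$ back into the block form of $L_+^2$ and re-running Lemma~\ref{multi} on the vanishing relations between the $D_0$-block and the $e_+\otimes C_2$ tail yields the claimed shape
$$L_+^2=\left(\begin{smallmatrix}\sigma\otimes B_0 & \sigma\otimes C_1 & e_+\otimes C_2\\ \sigma\otimes C_1^{tr} & \sigma\otimes D_0 & 0\\ e_+^{tr}\otimes C_2^{tr} & 0 & 0\end{smallmatrix}\right),$$
with $B_0\in\C$, $C_1\in\Mat(d_2,\C)$, $C_2\in\Mat(1\times(3-2d_2),\C)$.

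Finally I would pin down the relation $A_0=C_2 C_2^{tr}$. This comes from computing $L_+^4=(L_+^2)^2$ from the displayed block form and comparing the $(1,1)$-block with $P_4=\sigma\otimes A_0$ (extended by zero). The only contributions to the $(1,1)$-block of $(L_+^2)^2$ that survive are $(\sigma\otimes B_0)^2=0$ (since $\sigma^2=0$), $(\sigma\otimes C_1)(\sigma\otimes C_1^{tr})$-type terms which vanish for the same reason, and the product $(e_+\otimes C_2)(e_+^{tr}\otimes C_2^{tr})=(e_+ e_+^{tr})\otimes(C_2 C_2^{tr})=\sigma\otimes(C_2 C_2^{tr})$; hence $\sigma\otimes A_0=\sigma\otimes C_2 C_2^{tr}$, giving $A_0=C_2 C_2^{tr}$.

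\textbf{Main obstacle.} The delicate point is the \emph{second} reduction: $L_+^2$ is not square-zero, so one cannot naively invoke Lemma~\ref{stl} on it, and one must argue that the auxiliary $\OO_3(\R)$-conjugation used to diagonalize the square-zero part of the lower block can be chosen to preserve all the normalizations already achieved (i.e.\ extends to an element of $\OO_5(\R)$ that still commutes with $L_0$ and keeps $P_4$ in standard form). Getting the bookkeeping of these nested block decompositions and the repeated use of Lemma~\ref{multi} exactly right — especially tracking which off-diagonal blocks are forced to be of type $e_+\otimes(\,\cdot\,)$ versus $\sigma\otimes(\,\cdot\,)$ versus zero — is where the real work lies; the individual matrix multiplications are routine given the identities in the linear-algebra subsection.
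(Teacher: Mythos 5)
Your proposal captures the key steps the paper uses — the relations $L_+^2P_4=P_4L_+^2=0$ (from $L_+^6=0$), Lemma~\ref{multi} to force the block shapes, $(L_+^2)^2=P_4$ to extract $A=CC^{tr}$, $DC^{tr}=0$, $D^2=0$, and finally Lemma~\ref{stl} applied to the lower-right block $D$ — so the route is essentially the paper's. The paper is a bit more economical: it block-decomposes $L_+^2$ directly as $\bigl(\begin{smallmatrix}B&C\\C^{tr}&D\end{smallmatrix}\bigr)$ and never needs the block form of $L_+$ itself (that is deferred to the subsequent Lemma~\ref{lplus}); going via $L_+$ as you do works but adds a step. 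Two small inaccuracies in your write-up worth flagging: (i) $P_4=L_+^4$ is \emph{not} a projector ($P_4^2=L_+^8=0$), so the phrase ``$P_4$ projects onto a line'' is wrong even though the relation $P_4L_+^2=0$ you draw from it is correct; (ii) $D^2=0$ does not follow merely from ``$L_+^4$ is supported on the first block'' — the $(2,2)$-block of $(L_+^2)^2$ is $C^{tr}C+D^2$, and one must also use $C=e_+\otimes C_0$ together with $e_+^{tr}e_+=0$ to kill $C^{tr}C$. Finally, your ``main obstacle'' about whether the $\OO_3(\R)$-conjugation diagonalizing $D$ is compatible with the earlier normalizations is a non-issue: any block-diagonal $\bigl(\begin{smallmatrix}\eins_2&0\\0&U_3\end{smallmatrix}\bigr)\in\OO_5(\R)$ leaves $P_4=\bigl(\begin{smallmatrix}A&0\\0&0\end{smallmatrix}\bigr)$ unchanged, and the lemma only asks for conjugation by an arbitrary element of $\OO_5(\R)$ (not one commuting with $L_0$ — that normalization is restored only at the end of each case).
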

\begin{proof}
Introduce the notation $L_+^2=\bigl(\begin{smallmatrix}
B&C\\
C^{tr}&D
\end{smallmatrix} \bigr)$
where $B\in\Sym(2d_1,\C)$.
Hence $L_+^2\,P_4=P_4\,L_+^2=0$ imply $B\,A=0,\,A\,B=0$ and $A\,C=0.$
By Lemma \ref{multi} we get $B=\sigma\otimes B_0$
with $B_0\in\C$ and $C=e_+\otimes C_0.$
Calculating $(L_+^2)^2$ and using Lemma\,\ref{structuree4} we get $A=CC^{tr}$, $DC^{tr}=0$ and $D^2=0$.
In particular, $A_0=C_0\,C_0^{tr}.$
Since $D^2=0$, Lemma\,\ref{stl} implies $D=\bigl(\begin{smallmatrix}
\sigma\otimes D_0&\\
&0
\end{smallmatrix} \bigr)$, where $D_0\in\Sym(d_2,\R)$ is a diagonal and positive definite matrix. 
From $D\in\Sym(3,\C)$ we get $2d_2\leq 3.$
Lemma \ref{multi} yields $C^{tr}=\left(\begin{smallmatrix}
\sigma\otimes C_1^{tr}\\
e_+^{tr}\otimes C_2^{tr}
\end{smallmatrix} \right)$, where $C_1\in\Mat(1\times d_2,\C)$ and $C_2\in\Mat(1\times(3-2d_2),\C).$
Finally, $\sigma\otimes A_0=C\,C^{tr}=\sigma\otimes (C_2\, C_2^{tr})$
implies $A_0=C_2\,C_2^{tr}.$ 
\end{proof}

\begin{lemma}
\label{lplus}
Assume $P_4\neq 0$.
Up to conjugation by an element of $\OO_{5}(\R)$ the matrix $L_+$ is of the form
 $$L_+=\bigl(\begin{smallmatrix}
0&e_+\otimes F_0\\
e_+^{tr}\otimes F_0^{tr}&G
\end{smallmatrix} \bigr)$$
where $G=\bigl(\begin{smallmatrix}
\sigma\otimes G_1&e_+\otimes G_3\\
e_+^{tr}\otimes G_3&0
\end{smallmatrix} \bigr)\in\Sym(3,\C)$ with $G_1\in\C$.
Furthermore, 
\begin{align*}
B_0=F_0\,F_0^{tr},\hspace{1cm}C_0=F_0\, G,\hspace{1cm}D=G^2,\hspace{1cm}A_0=F_0\, D\,F_0^{tr}
\end{align*}
and $d_2=1$. Finally, $D_0=G_3^2$.
\end{lemma}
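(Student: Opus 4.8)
The plan is to assume throughout that $P_4\neq 0$, so that by Lemma~\ref{structuree4} we may take $P_4=\bigl(\begin{smallmatrix}A&0\\0&0\end{smallmatrix}\bigr)$ with $A=\sigma\otimes A_0$, $A_0\in\R-\{0\}$, and then read off the block structure of $L_+$ from the vanishing relations $L_+P_4=P_4L_+=0$. First I would invoke Lemma~\ref{multi}(1)--(2) exactly as in the proof of Lemma~\ref{prol2}: the relation $P_4L_+=0$ forces the first block-row of $L_+$ to factor through $e_+^{tr}\otimes(-)$, and $L_+P_4=0$ forces the first block-column to factor through $e_+\otimes(-)$; since $L_+$ is symmetric these are transposes of one another. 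This gives $L_+=\bigl(\begin{smallmatrix}0&e_+\otimes F_0\\ e_+^{tr}\otimes F_0^{tr}&G\end{smallmatrix}\bigr)$ with $F_0\in\Mat(1\times 3,\C)$ and $G\in\Sym(3,\C)$; the upper-left $2\times 2$ block must be $e_+e_+^{tr}\otimes(\text{scalar})=\sigma\otimes(\text{scalar})$, but in fact the scalar is forced to vanish because $L_+P_4=0$ already killed that corner (the $\sigma$-part of the top-left block, when squared into $P_4$, would have to match $A$ through the off-diagonal terms, which it does — so one checks directly that the $(1,1)$ block of $L_+$ is $0$).

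Next I would compute $L_+^2$ in these blocks and match it against the shape produced by Lemma~\ref{prol2}. A direct block multiplication gives
\[
L_+^2=\begin{pmatrix}\sigma\otimes(F_0F_0^{tr})&e_+\otimes(F_0G)\\ e_+^{tr}\otimes(G F_0^{tr})&e_+^{tr}e_+\otimes(F_0^{tr}F_0)+G^2\end{pmatrix},
\]
and since $e_+^{tr}e_+=0$ the lower-right block is simply $G^2=D$. Comparing with the notation of Lemma~\ref{prol2}, where $L_+^2=\bigl(\begin{smallmatrix}\sigma\otimes B_0&e_+\otimes C_0\\ e_+^{tr}\otimes C_0^{tr}&D\end{smallmatrix}\bigr)$, I immediately get $B_0=F_0F_0^{tr}$, $C_0=F_0G$ and $D=G^2$. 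Then the identity $A_0=C_0C_0^{tr}$ from Lemma~\ref{prol2} becomes $A_0=F_0G G^{tr}F_0^{tr}=F_0G^2F_0^{tr}=F_0DF_0^{tr}$, using $G^{tr}=G$.

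The remaining points concern the internal structure of $G$ and the value $d_2=1$. Since $P_4\neq 0$ we have $A_0\neq 0$, hence $D=G^2\neq 0$; in particular $G\neq 0$ and $G^2\neq 0$. Lemma~\ref{prol2} applied to this nonzero $D\in\Sym(3,\C)$ with $D^2=0$ (note $D^2=G^4$, and $G^5$ appears in $L_+^5=0$ — one has to be a little careful here, but $D^2=0$ follows as in the proof of Lemma~\ref{prol2} from $(L_+^2)^2$ and Lemma~\ref{structuree4}) gives $D=\bigl(\begin{smallmatrix}\sigma\otimes D_0&0\\0&0\end{smallmatrix}\bigr)$ with $D_0$ a positive-definite diagonal matrix (nonzero, since $D\neq 0$) of size $d_2$ with $2d_2\leq 3$, so $d_2\in\{0,1\}$, and $d_2\neq 0$ forces $d_2=1$. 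Having fixed $d_2=1$, the matrix $D=G^2$ has rank coming from a single $\sigma\otimes D_0$ block, i.e. $D$ is $3\times 3$ of the shape with a $2\times 2$ upper-left $\sigma\otimes D_0$ and zeros elsewhere; writing $G=\bigl(\begin{smallmatrix}\sigma\otimes G_1&e_+\otimes G_3\\ e_+^{tr}\otimes G_3&*\end{smallmatrix}\bigr)$ in the block sizes $2+1$ and squaring, the relation $D=G^2$ pins down the lower-right entry of $G$ to be $0$ and yields $D_0=G_3^2$. The honest content of this last part is the normal form for $G$: one must argue that after a further conjugation by $\OO_3(\R)$ commuting with the already-fixed structure, $G$ has the claimed shape $\bigl(\begin{smallmatrix}\sigma\otimes G_1&e_+\otimes G_3\\ e_+^{tr}\otimes G_3&0\end{smallmatrix}\bigr)$; this is where I expect the main bookkeeping effort, and it follows by applying Lemma~\ref{stl}/Lemma~\ref{multi} to $D=G^2$ (a symmetric square-zero matrix) together with $GD=DG=0$ — exactly the same mechanism that produced the shape of $L_+$ from $P_4$, now one level down. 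The rest is the routine block-matrix verification of the four displayed identities.
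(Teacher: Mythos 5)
There is a genuine gap in the first step of your proof, namely the assertion that the top-left $2\times 2$ block of $L_+$ is zero. The relations $L_+P_4=0$ and $P_4L_+=0$, applied via Lemma~\ref{multi}, only force that block to be of the form $\sigma\otimes E_0$ for some scalar $E_0\in\C$; they do \emph{not} force $E_0=0$. Indeed, $L_+P_4=0$ places no constraint whatsoever on $E_0$: in block form the top-left entry of $L_+P_4$ is $(\sigma\otimes E_0)(\sigma\otimes A_0)=\sigma^2\otimes E_0A_0=0$ automatically, and the same cancellation $\sigma^2=0$, $\sigma e_+=0$, $e_+^{tr}\sigma=0$ makes $E_0$ invisible in every power $L_+^k$. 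So your parenthetical argument ("$L_+P_4=0$ already killed that corner" / "when squared into $P_4$, would have to match $A$") is not a proof; no $L_+$-only identity can pin down $E_0$.

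The paper closes this gap by invoking Corollary~\ref{ortlem}, specifically the relation $P_4L_-P_4=0$. Writing $L_-=\overline{L_+}$, one computes the top-left block of $P_4L_-P_4$ to be $(\sigma\otimes A_0)(\overline{\sigma}\otimes\overline{E_0})(\sigma\otimes A_0)=\sigma\overline{\sigma}\sigma\otimes A_0\overline{E_0}A_0=\sigma\otimes A_0^2\overline{E_0}$ (using $\sigma\overline{\sigma}=\rho$ and $\rho\sigma=\sigma$), and since $A_0\neq0$ this vanishes iff $E_0=0$. The use of an $L_-$-versus-$L_+$ orthogonality relation is essential here, and your proposal omits it. The remainder of your argument — computing $L_+^2$ blockwise and matching against Lemma~\ref{prol2} to read off $B_0=F_0F_0^{tr}$, $C_0=F_0G$, $D=G^2$, $A_0=F_0DF_0^{tr}$, then using $A_0\neq0\Rightarrow D\neq0\Rightarrow d_2=1$, and finally squaring the block decomposition of $G$ against $D$ to obtain $G$ in the stated form with $D_0=G_3^2$ — follows the paper's route and is essentially sound; but the first step as you wrote it does not establish the vanishing of the upper-left block and must be replaced by the argument via $P_4L_-P_4=0$.
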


\begin{proof}
Using Lemma\,\ref{structuree4} and $L_+P_4=P_4L_+=L_+^5=0$ we deduce $$L_+=\bigl(\begin{smallmatrix}
\sigma\otimes E_0&e_+\otimes F_0\\
e_+^{tr}\otimes F_0^{tr}&G
\end{smallmatrix} \bigr)$$
for an $E_0\in\C$.
By Lemma \ref{ortlem} we get $P_4L_-P_4=0$
which is equivalent to $E_0=0$.
Calculating $L_+^2$ and using Lemma\,\ref{prol2}, we obtain the first three of the claimed identities. Plugging $C_0=F_0G$ into $A_0=C_0\,C_0^{tr}$ and using $D=G^2$ we obtain the fourth equation, which implies that $D_0$ cannot vanish, i.e., $d_2=1$. Decomposing $G$
corresponding to $D$ and evaluating $D=G^2$ yields that $G$ is of the stated form.
\end{proof}

\begin{lemma}
\label{falle4nn}
If $\rk P_4=1$ then there exists an $A\in\OO(5)$ such that $AL(t)A^{-1}=\cos(t)L_0+\sin(t)L_1$ with
$L_0=\mbox{diag}(\sqrt{3},\tfrac{1}{\sqrt{3}},0,-\tfrac{1}{\sqrt{3}},-\sqrt{3})$ and $$L_1=\tfrac{1}{3\sqrt{3}}\left(\begin{smallmatrix}
0&5&0&2&0\\
5&0&4&0&2\\
0&4&0&4&0\\
2&0&4&0&5\\
0&2&0&5&0
\end{smallmatrix} \right).$$
In particular, $\dim{E}=3$.
\end{lemma}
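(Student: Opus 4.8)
The plan is to start from the structural description of $L_+$ obtained in Lemma~\ref{lplus}, which applies precisely when $P_4 = L_+^4 \neq 0$, i.e.\ when $\rk P_4 = 1$. That lemma pins down $L_+$ up to conjugation by an element of $\OO_5(\R)$ commuting with $L_0$ to a matrix depending on the scalars $F_0 \in \Mat(1\times 1,\C)$ (a $1\times 2$ block after tensoring with $e_+$), $G_1 \in \C$, and $G_3 \in \C$, together with the relations $B_0 = F_0 F_0\tp$, $C_0 = F_0 G$, $D = G^2$, $A_0 = F_0 D F_0\tp$, $d_2 = 1$, $D_0 = G_3^2$. First I would write out $L_+$ and hence $L_0 = L_+ + L_-$ and $L_1 = i(L_+ - L_-)$ explicitly as $5\times 5$ matrices in terms of these few complex parameters, so that the problem reduces to a finite system of polynomial equations in the real and imaginary parts of $F_0, G_1, G_3$.

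Next I would impose the remaining constraints that have not yet been used. The crucial one is that $L_0$ must actually equal $\diag(\sqrt3, \tfrac1{\sqrt3}, 0, -\tfrac1{\sqrt3}, -\sqrt3)$ in the chosen basis — so far only the stabilizer of $L_0$ has been used to normalize, but the precise eigenvalue pattern of $L_0$ forces equations on the parameters. In addition, the full minimal polynomial equation (equivalently, Lemma~\ref{cmp}: $L_+ P_4 = 0$, $L_+ P_2 + L_- P_4 = 0$, $L_+ P_0 + L_- P_2 = 0$) and the vanishing conditions $L_+^5 = 0$ (already used) must all hold; and one should also use that $\spec(L(t))$ is the prescribed set for \emph{all} $t$, not just $t=0$. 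I would feed these into the parametrized form, obtaining a system whose solution set, modulo the residual stabilizer action and sign choices, I expect to be a single point (up to the allowed symmetries), yielding the claimed $L_1 = \tfrac{1}{3\sqrt3}\bigl(\begin{smallmatrix} 0&5&0&2&0\\5&0&4&0&2\\0&4&0&4&0\\2&0&4&0&5\\0&2&0&5&0\end{smallmatrix}\bigr)$.

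Once $L_1$ is determined, the statement $\dim E = 3$ should follow by direct computation: form $P_4, P_2, P_0$ from this explicit $L_1$ (or equivalently compute $P(t)$ and use the identities \eqref{1}--\eqref{3}), and check that $E = \sum_{i\in I} \im P_i$ has dimension exactly $3$ — which by Lemma~\ref{dr} is the maximum possible, and in particular shows the kernel of $L(t)$ is \emph{not} constant, matching the remark after Lemma~\ref{ehut}. This is a routine rank computation on concrete small matrices.

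The main obstacle I anticipate is organizing the polynomial system efficiently: naively there are several complex unknowns and many quadratic/quartic relations, and brute-force elimination could be messy. The key simplification I would look for is that Lemma~\ref{lplus} already expresses everything in terms of $G$ (a $3\times3$ matrix of a very special shape with only $G_1, G_3$ free) and $F_0$ (essentially one complex number), so the real content is a low-dimensional system; moreover one can use the freedom to conjugate by $\diag(\pm1,\dots)$ and by the block-diagonal elements of $\stab(L_0)$ to fix phases of $F_0, G_1, G_3$, cutting the unknowns down to a handful of real parameters. After that, matching $\spec(L_0)$ and imposing $L_+P_2 + L_-P_4 = 0$ and $L_+P_0 + L_-P_2 = 0$ should over-determine the system and isolate the unique solution. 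I would also double-check at the end that the resulting $L_1$ is symmetric and that $L(t)$ is genuinely isospectral for all $t$, as a consistency test.
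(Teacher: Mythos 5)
Your overall plan—start from the parametrized form of $L_+$ in Lemma~\ref{lplus}, impose the remaining equations of Lemma~\ref{cmp}, solve the resulting polynomial system, then read off $L_1$ and compute $\dim E$—is the same route the paper takes. However, there are two genuine misconceptions in the setup that would derail the execution.

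First, the parameter count is wrong: you describe $F_0$ as ``$\Mat(1\times1,\C)$ \ldots essentially one complex number.'' In Lemma~\ref{lplus} the block decomposition of $L_+$ is $2+3$, so the off-diagonal block $e_+\otimes F_0$ is $2\times 3$ and $F_0$ is a $1\times 3$ row vector with three complex entries. The paper's proof makes this explicit by writing $F_0=(e_+^{tr}\otimes F_1+e_-^{tr}\otimes F_2,\,F_3)$ with $F_1,F_2,F_3\in\C$, and the bulk of the work consists of killing $F_1$ and $F_3$ (and then $G_1$) using particular entries of $L_+P_2+L_-P_4=0$ and $L_+P_0+L_-P_2=0$, together with the positivity statement $A_0=F_2^2D_0>0$ which forces $F_2\in\R^*$. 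Treating $F_0$ as a single scalar misses this entire reduction.

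Second, and more seriously, the basis you are working in is not the eigenbasis of $L_0$, and your claim that ``so far only the stabilizer of $L_0$ has been used to normalize'' is false. The conjugations in Lemma~\ref{stl}, Lemma~\ref{structuree4}, Lemma~\ref{prol2} and Lemma~\ref{lplus} are by general elements of $\OO_5(\R)$ (the paper even has a standing Convention to this effect). In the Lemma~\ref{lplus} basis the top-left $2\times2$ block of $L_+$ is zero, hence so is the top-left $2\times2$ block of $L_0=L_++L_-$; so $L_0$ cannot equal $\diag(\sqrt3,\tfrac1{\sqrt3},0,-\tfrac1{\sqrt3},-\sqrt3)$ in this basis. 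Imposing that equation, as you propose, would give an empty solution set. What the paper actually does is solve the basis-independent equations $L_+P_2+L_-P_4=0$ and $L_+P_0+L_-P_2=0$ first, obtaining finitely many $(F_2,G_3)$, and only then perform a final change of basis so that the new basis consists of unit eigenvectors of $L_0$; it is in that final basis that $L_0$ becomes the diagonal matrix of the statement and one reads off $L_1$ (up to $\stab(L_0)$-conjugation). That final change of basis is an essential step missing from your plan.

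The rest of your proposal is sound: once $L_1$ is fixed, computing $\dim E$ from $E=\sum_i\im P_i$ (via Lemma~\ref{ehut}) is indeed a routine rank computation, and using $\diag(\pm1,\dots,\pm1)\in\stab(L_0)$ to fold the finitely many solutions into a single representative matches what the paper does at the end.
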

\begin{proof}
Introduce the notation $F_0=(e_+^{tr}\otimes F_1+e_-^{tr}\otimes F_2,F_3)$ with $F_1,F_2,F_3\in\C$. Then $A_0=F_0\, D\,F_0^{tr}$ is equivalent to $A_0=F_2^2\,D_0$ which implies $F_2\in\R^*$. From the $(4,5)$-component of $L_+\,P_2+L_-\,P_4=0$ and $F_2,G_3\in\R^*$ we have $F_1=0$.
Thus the $(5,1)$-component of $L_+\,P_2+L_-\,P_4=0$ yields $G_1=-2\tfrac{F_2F_3}{G_3}.$ Therefore the $(5,5)$-component of $L_+\,P_0+L_-\,P_2=0$ implies $F_3=0$. Hence $L_+\,P_2+L_-\,P_4=0$ is equivalent to
$3G_3^2+3F_2^2-5=0$ and $L_+P_0+L_-P_2=0$ reduces to $3-10F_2^2+3F_2^4=0.$ Consequently, $F_2=\pm\tfrac{1}{\sqrt{3}}$ or $F_2=\pm\sqrt{3}.$
If $F_2=\pm\sqrt{3}$ we have $\mbox{Im}G_3\neq 0$ which contradicts $G_3\in\R^*$.
Thus $F_2=\pm\tfrac{1}{\sqrt{3}}$. Consequently,
$(F_2,G_3)\in\left\{(\tfrac{1}{\sqrt{3}},\tfrac{2}{\sqrt{3}}),(\tfrac{1}{\sqrt{3}},-\tfrac{2}{\sqrt{3}}),(-\tfrac{1}{\sqrt{3}},\tfrac{2}{\sqrt{3}}),(-\tfrac{1}{\sqrt{3}},-\tfrac{2}{\sqrt{3}})\right\}.$
We determine $L_0$ and $L_1$ for each of these cases and
perform a change of the basis such that the basis consists of unit eigenvectors of $L_0$.
If $(F_2,G_3)=(\frac{1}{\sqrt{3}},\frac{2}{\sqrt{3}})$ or $(F_2,G_3)=(-\tfrac{1}{\sqrt{3}},-\tfrac{2}{\sqrt{3}})$ we obtain the above $L_1$ with the $+$-sign.
For the remaining two cases the sign of $L_1$ changes,
which corresponds to a change of orientation of $(M,g_0)$. Conjugating $\cos(t)L_0+\sin(t)L_1$ by $\mbox{diag}(\unaryminus1,1,\unaryminus1,1,\unaryminus1)$ the claim follows.
\end{proof}

\begin{lemma}
For $P_4=0$ and $L_+^3\neq 0$ there exists an $A\in\OO(5)$ such that $AL(t)A^{-1}=\cos(t)L_0+\sin(t)L_1$ with
$L_0=\mbox{diag}(\sqrt{3},\tfrac{1}{\sqrt{3}},0,-\tfrac{1}{\sqrt{3}},-\sqrt{3})$ and
$$L_1=\tfrac{1}{\sqrt{3}}\left(\begin{smallmatrix}
0&\sqrt{3}&0&0&0\\
\sqrt{3}&0&0&2&0\\
0&0&0&0&0\\
0&2&0&0&\sqrt{3}\\
0&0&0&\sqrt{3}&0
\end{smallmatrix} \right).$$
In particular $\dim E=1$.
\end{lemma}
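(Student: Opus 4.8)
The plan is to follow the template of Lemmas~\ref{structuree4}--\ref{falle4nn}: first pin down an $\OO_5(\R)$-normal form for $L_+$ forced by $P_4=L_+^4=0$ together with $L_+^3\neq 0$, then substitute it into the two surviving equations of Lemma~\ref{cmp}, solve the resulting polynomial system, and finally reconstruct $(L_0,L_1)$ and pass to a basis of unit eigenvectors of $L_0$.

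First I would determine the structure of $L_+$. The matrix $L_+^2$ is symmetric with $(L_+^2)^2=L_+^4=0$, so Lemma~\ref{stl} gives $L_+^2=\bigl(\begin{smallmatrix}\sigma\otimes B_0&0\\0&0\end{smallmatrix}\bigr)$ with $B_0\in\Mat(d_2,\R)$ diagonal and positive definite (or null) and $2d_2\leq 5$, so $\rk L_+^2=d_2\in\{0,1,2\}$. Now $d_2=0$ gives $L_+^2=0$, hence $L_+^3=0$; and $d_2=1$ gives $\rk L_+^2=1$, hence $L_+^3=0$ as well, since for a nilpotent matrix the line $\im L_+^2$ is $L_+$-invariant, so $L_+$ acts on it by an eigenvalue, which must be $0$. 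Thus $d_2=2$ and $L_+^2=\bigl(\begin{smallmatrix}\sigma\otimes B_0&0\\0&0\end{smallmatrix}\bigr)$ on $\C^5=\C^4\oplus\C^1$ with $B_0\in\Mat(2,\R)$ positive definite diagonal. Bootstrapping from here exactly as in Lemmas~\ref{prol2} and~\ref{lplus} --- writing $L_+$ in block form for this decomposition and using $L_+L_+^2=L_+^2L_+$, Lemma~\ref{multi}, the relations $L_+^4=0$ and $L_+^3\neq 0$, and the orthogonality $P_iL_\pm P_j=0$ of Corollary~\ref{ortlem} --- reduces $L_+$ to a normal form depending on a few complex parameters, several of which are forced to be real and nonzero by the positive definiteness of $B_0$ propagating through the identities.

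Next I would impose the remaining constraints. Since $P_4=0$, the identities of Lemma~\ref{cmp} become $L_+P_2=0$ and $L_+P_0+L_-P_2=0$, where $P_2=4\,\sigma(L_+^3L_-)-\tfrac{10}{3}L_+^2$ and $P_0=6\,\sigma(L_+^2L_-^2)-\tfrac{20}{3}\sigma(L_+L_-)+\eins$ are now explicit matrices in the normal-form parameters and their complex conjugates. Reading these equations off entry by entry --- as in the proof of Lemma~\ref{falle4nn}, alternately killing off-block entries and invoking reality and positivity --- collapses everything to a small polynomial system whose only solution, up to conjugation by a diagonal element of $\OO(5)$ (which commutes with $L_0$ and at worst flips the sign of $L_1$, i.e. changes the orientation), is the stated one. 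From $L_0=L_++L_-$ and $L_1=i(L_+-L_-)$ I would compute $L_0$ and $L_1$ for this solution and change to a basis of unit eigenvectors of $L_0$, which produces $L_0=\mbox{diag}(\sqrt{3},\tfrac{1}{\sqrt{3}},0,-\tfrac{1}{\sqrt{3}},-\sqrt{3})$ and the displayed $L_1$. Finally, $\dim E=1$: this $L_0$ and this $L_1$ both annihilate $e_3$, while $\dim\ker L(t)=1$ for all $t$ by isospectrality, so $\ker L(t)=\Span\{e_3\}$ is constant and $E=\Span\{e_3\}$.

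The main obstacle is the interplay of the two structural steps. Bootstrapping the full normal form of $L_+$ out of the shape of $L_+^2$ --- and confirming that $\rk L_+^2=2$ really leaves only the one configuration, rather than some shape Lemmas~\ref{stl} and~\ref{multi} appear to permit a priori --- requires the same patient block-matrix and positivity bookkeeping as Lemmas~\ref{structuree4}--\ref{lplus}; and then extracting the unique solution from $L_+P_2=0$ and $L_+P_0+L_-P_2=0$ means tracking, branch by branch, which parameter choices are eliminated by reality and which survive: straightforward but lengthy linear algebra over $\C$.
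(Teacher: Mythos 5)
Your outline follows the paper's broad template (extract a normal form for $L_+$, feed it into the surviving identities of Lemma~\ref{cmp}, solve, change basis), and some individual observations are sound: the Jordan-invariant argument that $\rk L_+^2\in\{0,1\}$ forces $L_+^3=0$ is correct, as is the final remark that the middle row and column of $L_1$ vanish and isospectrality pins $\dim\ker L(t)=1$, giving $\dim E=1$. However, the central structural step has a real gap, and I do not believe it goes through ``exactly as in Lemmas~\ref{prol2} and~\ref{lplus}'' as you assert.

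You choose to organize the analysis around $L_+^2$, i.e., you apply Lemma~\ref{stl} to $L_+^2$ (using $(L_+^2)^2=L_+^4=P_4=0$), arrive at a decomposition $\C^5=\C^4\oplus\C^1$ with $L_+^2=\bigl(\begin{smallmatrix}\sigma\otimes B_0&0\\0&0\end{smallmatrix}\bigr)$, $B_0\in\Mat(2,\R)$ diagonal positive definite, and then write $L_+=\bigl(\begin{smallmatrix}T&U\\U^{tr}&V\end{smallmatrix}\bigr)$ with $T\in\Sym(4,\C)$. The difficulty is that the mechanism which makes Lemmas~\ref{prol2} and~\ref{lplus} work --- multiplying $L_+$ against the block-diagonalized matrix and getting \emph{zero}, so that Lemma~\ref{multi} applies --- is not available for your diagonal block $T$: $L_+L_+^2=L_+^2L_+=L_+^3\neq 0$. (You write ``using $L_+L_+^2=L_+^2L_+$'', but this is just associativity and only yields that $T$ commutes with $\sigma\otimes B_0$.) You do obtain $U=e_+\otimes U_0$ from $(\sigma\otimes B_0)U=0$, but for $T$ the constraints you are left with are $T$ symmetric, $T$ commuting with $\sigma\otimes B_0$, $TU=0$, and (after using $V=0$, which follows from $U^{tr}U+V^2=0$ and $e_+^{tr}e_+=0$) $T^2=\sigma\otimes(B_0-U_0U_0^{tr})$. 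None of Lemmas~\ref{stl} or~\ref{multi} reduces a $4\times4$ symmetric $T$ with these relations to a normal form; you would need a genuinely new argument here and have not indicated one.

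The paper sidesteps this by applying Lemma~\ref{stl} to the \emph{top} nonvanishing power $L_+^3$ (with $(L_+^3)^2=0$), which gives $L_+^3=\bigl(\begin{smallmatrix}\sigma\otimes S_0&0\\0&0\end{smallmatrix}\bigr)$ with $d_3\in\{1,2\}$ and the decomposition $\C^5=\C^{2d_3}\oplus\C^{5-2d_3}$. For that decomposition $L_+L_+^3=L_+^3L_+=L_+^4=0$, so Lemma~\ref{multi} applies on both sides and forces $T=\sigma\otimes T_0$ with $T_0$ a scalar, after which $L_+P_2=0$ kills $T_0$. The case $d_3=2$ is then ruled out by a short computation (there $V\in\C$ and $V^3=0$ force $V=0$, hence $S_0=0$). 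The subsequent case split on $V^2=\bigl(\begin{smallmatrix}\sigma\otimes W&\\&0\end{smallmatrix}\bigr)$ with $W\geq 0$ versus $W>0$ --- which you do not mention --- is also essential: the $W>0$ branch is excluded because it forces $L_+^3=0$. So the fix to your plan is to start from $L_+^3$ rather than $L_+^2$; the rest of your outline (feeding the normal form into $L_+P_2=0$ and $L_+P_0+L_-P_2=0$, reading off a small real polynomial system, and conjugating by a diagonal orthogonal matrix that fixes $L_0$ to normalize the sign of $L_1$) then matches the paper's argument.
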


\begin{proof}
By $(L_+^3)^2=0$ and Lemma\,\ref{stl} we get
$L_+^3=\bigl(\begin{smallmatrix}
\sigma\otimes S_0&0\\ 0&0
\end{smallmatrix} \bigr)$ where $S_0\in\mbox{Mat}(d_3,\R)$ is a positive definite, diagonal matrix. Therefore $d_3\in
\left\{1,2\right\}.$
Introduce the notation $L_+=\bigl(\begin{smallmatrix}
T&U\\
U^{tr}&V
\end{smallmatrix} \bigr)$ 
where $T\in\mbox{Mat}(2d_3,\C)$.
From $L_+\,L_+^3=0$ and $L_+^3\,L_+=0$ we get $T=\sigma\otimes T_0$ with $T_0\in\mbox{Mat}(d_3,\C)$ and $U=e_+\otimes U_0$.
Furthermore, the identity $L_+\,P_2=0$ implies $L_+^3\,P_2=0$, which is equivalent to $T_0=0.$
Hence $L_+=\bigl(\begin{smallmatrix}
0&e_+\otimes U_0\\
e_+^{tr}\otimes U_0^{tr}&V
\end{smallmatrix} \bigr).$
Calculating $L_+^3$ and comparing with $L_+^3=\bigl(\begin{smallmatrix}
\sigma\otimes S_0&0\\ 0&0
\end{smallmatrix} \bigr)$ yields $V^3=0$ and $(e_+\otimes U_0)V^2=0$.
If $d_3=2$ we have $V\in\C$ and thus $V=0$, which yields $S_0=0$, contradicting our assumption.
Thus $d_3=1$.
From $(V^2)^2=0$ and Lemma\,\ref{stl} we have $V^2=\bigl(\begin{smallmatrix}
\sigma\otimes W&\\
&0
\end{smallmatrix} \bigr)$ where $W\geq 0$.

\smallskip

First let $W=0$ and thus $V^2=0.$ Using Lemma\,\ref{stl} we get $V=\bigl(\begin{smallmatrix}
\sigma\otimes V_0&0\\ 0&0
\end{smallmatrix} \bigr)$ for $V_0\in\R$. Since $V_0=0$ would imply $S_0=0$, we have $V_0>0$.
Introduce $u_1,u_2,u_3\in\C$ by $U_0=(u_1,u_2,u_3).$ Calculating $L_+^3$ yields $S_0=\frac{1}{4}(u_1+iu_2)^2V_0.$
Since $S_0>0$ we get $u_1+iu_2\in\R^{*}.$ 
Therefore the $(4,2)$ equation of $L_+\,P_2=0$ is equivalent to $\overline{u_1}+i\,\overline{u_2}=0$. 
Combining this equation with $u_1+iu_2\in\R^{*}$ yields $u_1\in\R$ and $u_2\in i\R.$ 
Hence the $(5,5)$ equation of $L_+\,P_0+L_-\,P_2=0$ is equivalent to
 $u_2^2\overline{u_3}^2=0.$ Since $u_2=0$ would imply $L_+^3=0$ we get $u_3=0.$
Thus $L_+\,P_2=0$ is equivalent to $V_0^2=(10+12u_2^2)/3.$ Plugging this into $L_+\,P_0+L_-\,P_2=0$ yields
$u_2=\pm\tfrac{i}{\sqrt{2}}$ and thus $V_0=\tfrac{2}{\sqrt{3}}.$
For both possible cases we obtain $-L_1.$ Conjugating by $\mbox{diag}(\unaryminus1,1,1,\unaryminus1,1)$ yields the claim.

\smallskip

If $W>0$ the equation $(e_+\otimes U_0)V^2=0$
implies $u_1=-i\,u_2$, where the $u_i$ are as above. Since $u_1=-i\,u_2$ yields $L_+^3=0$, the case $W>0$ cannot occur.
\end{proof}

\begin{lemma}
\label{ph2}
For $L_+^3=0$ and $L_+^2\neq 0$ there exists an $A\in\OO(5)$ such that $AL(t)A^{-1}=\cos(t)L_0+\sin(t)L_1$ with
$L_0=\mbox{diag}(\sqrt{3},\tfrac{1}{\sqrt{3}},0,-\tfrac{1}{\sqrt{3}},-\sqrt{3})$ and
$$L_1=\tfrac{1}{\sqrt{6}}\left(\begin{smallmatrix}
0&0&0&0&3\sqrt{2}\\
0&0&1&0&0\\
0&1&0&1&0\\
0&0&1&0&0\\
3\sqrt{2}&0&0&0&0
\end{smallmatrix} \right), L_1=\tfrac{1}{\sqrt{6}}\left(\begin{smallmatrix}
0&0&3&0&0\\
0&0&0&\sqrt{2}&0\\
3&0&0&0&3\\
0&\sqrt{2}&0&0&0\\
0&0&3&0&0
\end{smallmatrix} \right)\hspace{0.2cm}\mbox{or}\hspace{0.2cm} L_1=\tfrac{1}{\sqrt{3}}\left(\begin{smallmatrix}
0&1&0&2&0\\
1&0&0&0&2\\
0&0&0&0&0\\
2&0&0&0&1\\
0&2&0&1&0
\end{smallmatrix} \right).$$ In particular $\dim E=2$ for the first two cases and $\dim E=1$ for the last one.
\end{lemma}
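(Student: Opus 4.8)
\textbf{Proof proposal for Lemma \ref{ph2}.}

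The plan is to run the same strategy used in the previous lemma, but starting one step lower in the filtration: we are now in the case $P_4 = L_+^4 = 0$ and $L_+^3 = 0$ while $L_+^2 \neq 0$. The first step is to normalize $L_+^2$. Since $L_+^2$ is symmetric and $(L_+^2)^2 = L_+^4 = 0$, Lemma \ref{stl} (via the Convention) gives $L_+^2 = \bigl(\begin{smallmatrix}\sigma\otimes N_0 & 0\\ 0 & 0\end{smallmatrix}\bigr)$ with $N_0 \in \Mat(d,\R)$ a positive definite diagonal matrix (nonzero because $L_+^2 \neq 0$), so $d \in \{1,2\}$. Writing $L_+ = \bigl(\begin{smallmatrix} T & U\\ U^{tr} & V\end{smallmatrix}\bigr)$ with $T \in \Mat(2d,\C)$, the relations $L_+ \cdot L_+^2 = 0 = L_+^2 \cdot L_+$ together with Lemma \ref{multi} force $T = \sigma \otimes T_0$ and $U = e_+ \otimes U_0$; and since $L_+ P_2 = 0$ (Corollary \ref{ortlem}) implies $L_+^2 P_2 = 0$, one should be able to conclude $T_0 = 0$ exactly as in the preceding lemma, so $L_+ = \bigl(\begin{smallmatrix} 0 & e_+ \otimes U_0\\ e_+^{tr}\otimes U_0^{tr} & V\end{smallmatrix}\bigr)$. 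Computing $L_+^2$ from this form and matching against the normal form yields $V^2 = 0$, $(e_+\otimes U_0)V = $ (the off-diagonal block), and an identity expressing $N_0$ in terms of $U_0$ and $V$; in particular $L_+^3 = 0$ should become automatic or give one further constraint.

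The second step is to exploit $V^2 = 0$: Lemma \ref{stl} again writes $V = \bigl(\begin{smallmatrix}\sigma\otimes V_0 & 0\\ 0 & 0\end{smallmatrix}\bigr)$. Here the case analysis genuinely branches, which is why this lemma produces three matrices rather than one. One expects cases according to the rank of $V$ (equivalently $V_0 = 0$ or $V_0 > 0$) and, when $V$ is a $3\times 3$ block acted on, possibly a sub-branch coming from how $U_0$ sits relative to the kernel of $V$. In each branch one introduces coordinates for the entries of $U_0$ (three complex scalars, say $u_1,u_2,u_3$) and for $V_0$, uses the normal-form identities for $L_+^2$ and the automatic vanishing $L_+^3 = 0$ to cut down the parameters, and then imposes the remaining equations $L_+ P_2 + L_- P_4 = 0$ (i.e. $L_+ P_2 = 0$ since $P_4 = 0$) and $L_+ P_0 + L_- P_2 = 0$ componentwise. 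The reality constraints (a coefficient of $N_0$ or $V_0$ must be a positive real, forcing some $u_i + iu_j \in \R^*$) combined with specific scalar equations coming from individual matrix entries will pin the free parameters down to finitely many solutions, exactly in the style of Lemma \ref{falle4nn} and the $L_+^3 \neq 0$ lemma.

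The third step is bookkeeping: for each surviving solution, reassemble $L_+ = \tfrac12(L_0 - iL_1)$, read off $L_0$ and $L_1$, and perform an orthogonal change of basis (within the centralizer of $L_0$, or absorbing a harmless permutation/sign change that rescales $L_0$ back to the standard diagonal form) so that the basis consists of unit eigenvectors of $L_0$ ordered by decreasing eigenvalue. Sign ambiguities in the parameters will, as before, correspond to conjugation by a diagonal $\pm1$ matrix commuting with $L_0$, so up to such conjugation one gets the three listed $L_1$. Finally $\dim E$ is computed from $\dim E = \dim \sum_{i \in I} \im P_i$ (Lemma \ref{ehut}): since $P_4 = P_{-4} = 0$ here, $E = \im P_2 + \im P_0 + \im P_{-2}$, and one evaluates these ranks directly from the explicit $L_\pm$ in each case, obtaining $2,2,1$.

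\textbf{Main obstacle.} The conceptual content is light; the real work is the branching in step two and the sheer volume of componentwise polynomial equations, where it is easy to miss a case or a sign. The most delicate point is making sure the case split on $V$ (and on the position of $U_0$) is exhaustive and that no spurious branch survives — in particular ruling out configurations that would force $L_+^2 = 0$ or $L_+^3 \neq 0$, contradicting the hypotheses of this lemma.
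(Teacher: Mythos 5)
Your proposal follows the paper's proof step by step: normalize $L_+^2$ via Lemma \ref{stl}, use $L_+L_+^2=0=L_+^2L_+$ and $L_+P_2=0$ to force $T_0=0$, read off $W^2=0$, $(e_+\otimes U_0)W=0$, $S_0=U_0U_0^{tr}$ from the normal form of $L_+^2$, normalize $W$, branch on $W_0=0$ versus $W_0>0$, and pin down the $u_i$ by componentwise equations. The two places where the paper fills in detail you leave implicit are (a) the elimination of the $d=2$ branch (there $W\in\C$, so $W^2=0$ forces $W=0$ and hence $\rk L(t)\le2$, contradicting the spectrum), and (b) in the $W_0=0$ sub-case, a sequence of three explicit planar rotations inside the stabilizer of the block structure is used to bring $U_0$ to the form $(0,u_2,u_3)$ with $u_2\in\R$ before the reality/eigenvalue argument forces $u_3\in i\R$; one small correction is that the off-diagonal block of $L_+^2$ vanishes, so the condition is $(e_+\otimes U_0)W=0$ rather than equality with some nonzero block. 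With those points filled in, your outline matches the paper's argument.
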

\begin{proof}
Using Lemma\,\ref{stl} identity $(L_+^2)^2=0$ implies $L_+^2=\bigl(\begin{smallmatrix}
\sigma\otimes S_0&0\\
0&0
\end{smallmatrix} \bigr)$,
where $S_0$ is a positive definite, diagonal matrix.
Introduce the notation $L_+=\bigl(\begin{smallmatrix}
T&U\\
U^{tr}&W
\end{smallmatrix} \bigr)$.
From $L_+^2\,L_+=0=L_+\,L_+^2$ we have
$T=\sigma\otimes T_0$ and $U=e_+\otimes U_0$.
Since $L_+\,P_2=0$ is equivalent to $T_0=0$ we get
$L_+=\bigl(\begin{smallmatrix}
0&e_+\otimes U_0\\
e_+^{tr}\otimes U_0^{tr}&W
\end{smallmatrix} \bigr)$.
Calculating $L_+^2$ and using Lemma\,\ref{prol2} yields $W^2=0$, $(e_+\otimes U_0)W=0$ and $S_0=U_0\,U_0^{tr}$.

\smallskip

First we suppose $S_0\in\Mat(2,\R)$ which implies $W\in\C$. Hence $W^2=0$ implies $W=0$ and therefore $\rk L(t)\leq 2$ for all $t\in\R$, which is a contradiction.

\smallskip

Next let $S_0\in\R$ and introduce the notation $U_0=(u_1,u_2,u_3)$ with $u_i\in\C$.
By Lemma\,\ref{stl} and $W^2=0$ we have $W=\bigl(\begin{smallmatrix}
\sigma\otimes W_0&\\
&0
\end{smallmatrix} \bigr)$, where $W_0\geq 0$.\\ First let $W_0=0$. There exists an $s_1\in\R$ such that conjugating $L_{\pm}$ by $T_1=\left(\begin{smallmatrix}
\eins_2&&\\
&D(s_1)&\\
&&1
\end{smallmatrix} \right)$, where $D(t)=\bigl(\begin{smallmatrix}
\cos(t)&\sin(t)\\
-\sin(t)&\cos(t)
\end{smallmatrix} \bigr)$, transforms $U_0$ into the form $U_0=(u_1,u_2,u_3)$ with $u_1\in\R$ and $u_2,u_3\in\C$.
Similarly, there exists an $s_2\in\R$ such that conjugating $T_1L_{\pm}T_1^{-1}$ by $T_2=\bigl(\begin{smallmatrix}
\eins_3&\\
&D(s_2)
\end{smallmatrix} \bigr)$ transforms $U_0$ into the form $U_0=(u_1,u_2,u_3)$ with $u_1,u_2\in\R$ and $u_3\in\C$.
Finally, there exists an $s_3\in\R$ such that conjugating $T_2T_1L_{\pm}T_1^{-1}T_2^{-1}$ by $T_3=\left(\begin{smallmatrix}
\eins_2&&\\
&D(s_3)&\\
&&1
\end{smallmatrix} \right)$ transforms $U_0$ into the form $U_0=(0,u_2,u_3)$ with $u_2\in\R$ and $u_3\in\C$.
Since $S_0=U_0\,U_0^{tr}$ and $S_0\in\R$ we thus get $u_3\in\R$ or $u_3\in i\R$.
One proves easily that the eigenvalues of $L(t)$ are given by $0$ and $\pm\sqrt{\sum_{i=1}^3(u_i\overline{u_i}\pm u_i^2)}$
and thus in the former case at least three eigenvalues vanish. Consequently, $u_3\in i\R$. Hence $L_+P_0+L_-P_2=0$ implies
$(u_2,u_3)=(\pm \tfrac{\sqrt{3}}{2},\pm i\tfrac{1}{2\sqrt{3}})$.
It is straightforward to verify that for each of these cases there exists an $A\in\OO(5)$ such that $AL(t)A^{-1}=\cos(t)L_0+\sin(t)L_1$ with
$L_0=\mbox{diag}(\sqrt{3},\tfrac{1}{\sqrt{3}},0,-\tfrac{1}{\sqrt{3}},-\sqrt{3})$ and $L_1$ is given by $$L_1=\tfrac{1}{\sqrt{3}}\left(\begin{smallmatrix}
0&1&0&2&0\\
1&0&0&0&2\\
0&0&0&0&0\\
2&0&0&0&1\\
0&2&0&1&0
\end{smallmatrix} \right).$$

Below we assume $W_0>0$. 
The identity $(e_+\otimes U_0)W=0$ yields $u_1=-i\,u_2$. Thus the $(5,5)$ equation of $L_+\,P_0+L_-\,P_2=0$ is given by
$W_0\,\overline{u_2}^2\,u_3^2=0.$ Since $u_3=0$ would imply $L_+^2=0$ we have $u_2=0$.
Consequently, $S_0=U_0U_0^{tr}$ is equivalent to $S_0=u_3^2$ and thus we have
$u_3\in\R^*$. Hence $L_+\,P_0+L_-\,P_2=0$ yields $W_0\in\left\{1/\sqrt{3},\sqrt{3}\right\}$ and $u_3\in\left\{\pm 1/\sqrt{6},\pm\sqrt{3/2}\right\}$.
From $\spec{L(t)}=\left\{0,\pm\sqrt{2}u_3,\pm W_0\right\}$ we thus get (i) $(W_0,u_3)=(1/\sqrt{3},\sqrt{3/2})$, (ii) $(W_0,u_3)=(1/\sqrt{3},-\sqrt{3/2})$, (iii) $(W_0,u_3)=(\sqrt{3},1/\sqrt{6})$ or (iv) $(W_0,u_3)=(\sqrt{3},-1/\sqrt{6})$.
We determine $L_0$ and $L_1$ for each of these cases and
perform a change of the basis such that the basis consists of unit eigenvectors of $L_0$, more precisely $L_0=\mbox{diag}(\sqrt{3},\tfrac{1}{\sqrt{3}},0,-\tfrac{1}{\sqrt{3}},-\sqrt{3})$. For the cases (i)-(iv) we get
\begin{align*}
&L_1=\tfrac{1}{\sqrt{6}}\left(\begin{smallmatrix}
0&0&3&0&0\\
0&0&0&-\sqrt{2}&0\\
3&0&0&0&3\\
0&-\sqrt{2}&0&0&0\\
0&0&3&0&0
\end{smallmatrix} \right), \hspace{1cm}L_1=-\tfrac{1}{\sqrt{6}}\left(\begin{smallmatrix}
0&0&3&0&0\\
0&0&0&\sqrt{2}&0\\
3&0&0&0&3\\
0&\sqrt{2}&0&0&0\\
0&0&3&0&0
\end{smallmatrix} \right),\\&L_1=-\tfrac{1}{\sqrt{6}}\left(\begin{smallmatrix}
0&0&0&0&3\sqrt{2}\\
0&0&1&0&0\\
0&1&0&1&0\\
0&0&1&0&0\\
3\sqrt{2}&0&0&0&0
\end{smallmatrix} \right),\hspace{1cm} L_1=\tfrac{1}{\sqrt{6}}\left(\begin{smallmatrix}
0&0&0&0&-3\sqrt{2}\\
0&0&1&0&0\\
0&1&0&1&0\\
0&0&1&0&0\\
-3\sqrt{2}&0&0&0&0
\end{smallmatrix}\right),
\end{align*}
 respectively.
Conjugating the matrices of the first row by the matrices $\mbox{diag}(1,\unaryminus1,1,1,1)$ and $\mbox{diag}(\unaryminus1,\unaryminus1,1,1,\unaryminus1)$, respectively, 
and the matrices of the second row by the matrices $\mbox{diag}(\unaryminus1,\unaryminus1,1,\unaryminus1,1)$ and $\mbox{diag}(\unaryminus1,1,1,1,1)$, respectively,  the claim follows.
\end{proof}

\begin{lemma}
If $L_+^2=0$ there exists an $A\in\OO(5)$ such that $AL(t)A^{-1}=\cos(t)L_0+\sin(t)L_1$ with
$L_0=\mbox{diag}(\sqrt{3},\tfrac{1}{\sqrt{3}},0,-\tfrac{1}{\sqrt{3}},-\sqrt{3})$ and
$$L_1=\tfrac{1}{\sqrt{3}}\left(\begin{smallmatrix}
0&0&0&0&3\\
0&0&0&1&0\\
0&0&0&0&0\\
0&1&0&0&0\\
3&0&0&0&0
\end{smallmatrix} \right).$$ In particular $\dim E=1$.
\end{lemma}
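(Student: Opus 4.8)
The plan is to follow the same template as in the previous lemmas: since $L_+^2=0$, apply Lemma~\ref{stl} directly to $L_+$ itself (not to a power of it), conjugate $L_+$ into the normal form $\bigl(\begin{smallmatrix}\sigma\otimes S_0&0\\0&0\end{smallmatrix}\bigr)$ with $S_0\in\Mat(d,\R)$ positive definite and diagonal, $2d\le 5$, and then determine $d$ and $S_0$ from the remaining relations of Lemma~\ref{cmp} and the spectral condition. First I would note that here $P_4=L_+^4=0$ and $P_2=4\sigma(L_+^3L_-)-\tfrac{10}{3}L_+^2=0$, so the only surviving equation from Lemma~\ref{cmp} is $L_+P_0=0$, with $P_0=6\sigma(L_+^2L_-^2)-\tfrac{20}{3}\sigma(L_+L_-)+\eins=-\tfrac{20}{3}\sigma(L_+L_-)+\eins$; and the equation $15\sigma(L_+^4L_-)-10L_+^3=0$ forces $L_+^3=0$ automatically, which is consistent. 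So effectively the constraints reduce to $L_+^2=0$ together with $L_+P_0=0$ and the spectral constraint $\spec L(t)=\{0,\pm\tfrac{1}{\sqrt3},\pm\sqrt3\}$.

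Next I would pin down $d=\rk_{\C}S_0$. If $d=2$ then $\rk L_+\le 2$, hence $\rk L(t)=\rk(\exp(it)L_++\exp(-it)L_-)\le 4$ for all $t$, contradicting that $L(t)$ has eigenvalues $\pm\sqrt3,\pm\tfrac{1}{\sqrt3}$ all nonzero (rank $4$); so $d=1$. Writing $L_+=\bigl(\begin{smallmatrix}\sigma\otimes s&v\\v^{tr}&0\end{smallmatrix}\bigr)$ with $s=S_0\in\R^*$, $v\in\Mat(2\times 3,\C)$, the conditions $L_+^2=0$ (giving the off-diagonal and lower blocks beyond what $\sigma^2=0$ already kills) force constraints on $v$; in fact $(\sigma\otimes s)v+v\cdot 0=0$ is not automatic, so $v=e_+^{tr}\otimes v_0$ for a row vector $v_0\in\Mat(1\times3,\C)$ by the $e_+$-annihilation lemma, and the lower-right block of $L_+^2$ gives $v^{tr}v=0$, i.e.\ $v_0^{tr}v_0=0$. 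After normalizing $v_0$ by the residual $\OO$-action fixing $L_0$ (as in Lemma~\ref{ph2}), one reduces to few scalar parameters; then impose $L_+P_0=0$ and the spectral equalities to solve for those parameters, read off $L_0,L_1$, and conjugate by a suitable $\diag(\pm1,\dots,\pm1)$ to normalize signs, exactly as in the earlier cases. Finally $\dim E=1$ should fall out because $P_2=P_4=0$ forces $E=\im P_0$, and a rank count on $P_0$ (using that it is the kernel projector averaged, Lemma~\ref{ehut} with only $P_0$ surviving gives $E=\im P_0=\ker L(t)$) gives $\dim E=m=1$.

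The main obstacle I expect is the bookkeeping in the last step: solving the system coming from $L_+P_0=0$ together with the spectral normalization to isolate the scalar parameters, and then the choice of the orthogonal change of basis bringing the eigenbasis of $L_0$ into the standard diagonal order while tracking the sign ambiguities — this is the step that in the analogous Lemma~\ref{ph2} required several explicit conjugations by $\diag(\pm1,\dots,\pm1)$. A secondary subtlety is making sure that the case $d=1$ with $v_0=0$ (which would give $L_+^3=0$ trivially but also $\rk L_+\le 0$ in the relevant block, hence too low a rank for $L(t)$) is correctly excluded, so that the construction does produce the single stated $L_1$ and not a degenerate family.
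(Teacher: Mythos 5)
Your opening move is right — apply Lemma~\ref{stl} to $L_+$ itself, giving $L_+=\bigl(\begin{smallmatrix}\sigma\otimes S_0&0\\0&0\end{smallmatrix}\bigr)$ with $S_0\in\Mat(d,\R)$ diagonal positive definite — and the paper starts the same way. But your rank argument for pinning down $d$ is reversed, and this throws off the rest of the proof. If $d=2$ then $\rk L_+=2$ (since $\sigma$ has rank $1$), so $\rk L(t)\le 4$; that is \emph{not} a contradiction — $L(t)$ has rank exactly $4$. What \emph{is} excluded is $d=1$: then $\rk L_+=1$, so $\rk L(t)\le 2$, contradicting rank $4$. (The paper makes the exclusion of $d=1$ via $P_0$: if $d=1$ the block structure forces $\rk P_0\ge 3$, contradicting $\rk P_0=m=1$.) So the correct value is $d=2$, the opposite of what you concluded.

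The second inconsistency is that after invoking Lemma~\ref{stl} on $L_+$ you later re-parametrize $L_+$ as $\bigl(\begin{smallmatrix}\sigma\otimes s&v\\v^{tr}&0\end{smallmatrix}\bigr)$ with a possibly nonzero off-diagonal block $v$. But Lemma~\ref{stl} already normalizes \emph{all} of $L_+$ into the block-diagonal form $\bigl(\begin{smallmatrix}\sigma\otimes S_0&0\\0&0\end{smallmatrix}\bigr)$ with zero off-diagonal blocks; there is no $v$ left to solve for. You have slipped back into the template of the earlier lemmas, where Lemma~\ref{stl} was applied to a \emph{power} of $L_+$ and the off-diagonal blocks of $L_+$ itself survived — that is not the situation here. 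With $d=2$ and no $v$, the remaining work is short: $P_{\pm4}=P_{\pm2}=0$, so $P(t)=P_0$ and $P_0^2=P_0$; writing $P_0$ in blocks and using the explicit form of $L_\pm$ one finds the lower-right $1\times1$ block of $P_0$ is $\eins$, the off-diagonal blocks vanish, and the upper-left block must be zero. This forces $S_0^4-\tfrac{10}{3}S_0^2+\eins_2=0$, so $S_0$ is a diagonal matrix with entries $\{\sqrt3,1/\sqrt3\}$; a diagonal $\pm1$ conjugation then produces the stated $L_1$. Your concluding remark about $\dim E=1$ (that $P_{\pm4}=P_{\pm2}=0$ forces $E=\im P_0=\ker L(t)$ of dimension $m=1$) is correct and matches the paper.
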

\begin{proof}
From $L_+^2=0$ we have 
$L_+=\left(\begin{smallmatrix}\sigma\otimes S_0&\\
&0\end{smallmatrix} \right)$, where $S_0\in\mbox{Mat}(d,\R)$ is a diagonal and positive definite matrix.
Furthermore, $P_{\pm 4}=0=P_{\pm 2}$ and thus $P(t)=P_0$, which implies $P_0^2=P_0$.
Introduce the notation $P_0=\left(\begin{smallmatrix} T&U\\
U^{tr}&V\end{smallmatrix} \right)$, where $T\in\mbox{Mat}(2d,\C)$. By the very definition of $P_0$ we get $U=0$ and $V=\eins$.
The equations $L_+P_0=0=P_0L_+$ imply $T=\sigma \otimes T_0$ for $T_0\in\mbox{Mat}(d,\R)$. Therefore $P_0^2=P_0$ yields $T_0=0$.
Consequently, $S_0^4-\tfrac{10}{3}S_0^2+\eins_d=0$.
If $d=1$ we get $\rk P_0=3$ which contradicts our assumption. Hence $d=2$. Thus we obtain $S_0=\mbox{diag}(\sqrt{3},\tfrac{1}{\sqrt{3}})$ or $S_0=\mbox{diag}(\tfrac{1}{\sqrt{3}},\sqrt{3})$. In the former case the claim follows by conjugation by $\mbox{diag}(\unaryminus1,\unaryminus1,1,1,1)$,
the remaining cases is treated similarly.
\end{proof}

By combining the previous results we finally obtain Theorem\,\ref{zus}.

\section{Classification of isoparametric hypersurfaces with $(g,m)=(6,1)$}
\label{sec4}
After giving a very short exposition to isoparametric hypersurfaces in spheres in Subsection\,\ref{21}, we explain in Subsection\,\ref{22} the significance of Theorem\,\ref{zus}
 in the context of isoparametric hypersurfaces in spheres. Finally, in Subsection\,\ref{23} we  show that all isoparametric hypersurfaces in $\Sph^7$ with $g=6$ are homogeneous and thereby reprove a result of Dorfmeister and Neher \cite{dn}.

\subsection{Isoparametric hypersurfaces in spheres}
\label{21}
Hypersurfaces in spheres with constant principal curvatures are called isoparametric.
M\"unzner \cite{munzner, munzner2} showed that the number of distinct principal
curvatures $g$ can be only $1$, $2$, $3$, $4$, or $6$, and gave restrictions for the
multiplicities as well. The possible multiplicities of the curvature distributions were classified in \cite{munzner2,abresch,stolz},
and coincide with the multiplicities in the known examples. So far the cases $g = 4$ and $g=6$ are not yet completely classified.
See e.g. the paper \cite{tb1} of Thorbergsson for a survey of isoparametric hypersurfaces in spheres.

\smallskip

For the case $g=6$ all multiplicities coincide and are given either by $m=1$ or $m=2$. 
Furthermore, exactly two examples are known for this case, both of which are homogeneous. They are given as orbits of the
isotropy representation of $\Gtwo/\SO(4)$ or as orbits in the unit sphere $\Sph^{13}$ of the Lie
algebra $\frak{g}_2$ of the adjoint representation of the Lie group $\Gtwo$ and have multiplicities $m=1$ and $m=2$, respectively.
Dorfmeister and Neher \cite{dn} conjectured that all isoparametric hypersurfaces with $g=6$ are homogeneous and proved this in the affirmative 
for the case $m=1$. Since homogeneous isoparametric hypersurfaces in spheres were classified by Takagi and Takahashi \cite{tt}, this provides a classification of 
isoparametric hypersurfaces with $(g,m)=(6,1)$.  The case $m=2$ is not classified yet.

\subsection{Link of isoparametric hypersurfaces to Theorem\,\ref{zus}}
\label{22}
Throughout this paper $M$  denotes a connected, smooth manifold of dimension $n$.
An embedding $F_{0}:{M}\hookrightarrow \Sph^{n+1}$ together 
with a distinguished unit normal vector field $\nu_0\in\Gamma(\nu M)$ is called an {\em isoparametric hypersurface in $\Sph^{n+1}$}
if and only if its principal curvatures are constant. We denote by $A_0$ the shape operator of $F_0$ with respect to $\nu_0$ and by $\lambda^{0}_{j}$, $j\in\left\{1,...,g\right\}$,
the principal curvatures. We further assume without loss of generality $\lambda^{0}_1>...>\lambda^0_g$ and
define $\theta_j\in\left(-\frac{\pi}{2},\frac{\pi}{2}\right)$ such
that $\lambda^0_j=\cot(\theta_j)$.
It is well-known that the $j$-th curvature distribution $D_j$, which is given by $D_j(p)=\mbox{Eig}({A_0}_{\lvert p},\lambda_j^0)$ for $p\in M$, is integrable and its leaves $\frak{L}_j$ are small spheres in $\Sph^{n+1}$.

\smallskip

We consider the \textit{parallel surface} $F_{s}: M\hookrightarrow\Sph^{n+1}$  defined via
\begin{align*}
p\mapsto F_s(p):=\exp_{F_0(p)}(s\nu_{0\vert p})=\cos(s)F_0(p)+\sin(s)\nu_{0\vert p},
\end{align*}
endowed with the orientation $\nu_{s}(p)=-\sin(s)F_{0}(p)+\cos(s){\nu_{0}}_{\vert_p}$.
If $s\neq\theta_j$, the parallel surface $F_s(M)$ is again an isoparametric hypersurface with principal curvatures $\lambda^s_j=\cot(\theta_j-s)$.
For $s_0=\theta_j+\ell\pi$, $\ell\in\Z_2$, the map $F_{s_0}$ focalizes $\frak{L}_j(p)$ to one point in the $(n-m_j)$-dimensional focal submanifold $M_{j,\ell}:=F_{\theta_j+\ell\pi}(M)$. 

\smallskip

Let $\ell\in\Z_2$ be given. M\"unzner \cite{munzner} proved that the spectrum of the shape operator 
$\mathcal{A}_{\nu_{\lvert \overline{p}}}$ of $M_{j,\ell}$ is independent of $\nu\in\nu M_{j,\ell}$ and $\overline{p}\in M_{j,\ell}$
and is given by  
\begin{align*}
\mbox{spec}(\mathcal{A}_{\nu_{\lvert \overline{p}}})=\left\{\cot\big((i-j)\pi/g \big)\, \vert\, i\in \left\{1,...,g\right\},\, i\neq j\right\}.
\end{align*}
Thus for each $\overline{p}\in M_{j,\ell}$ and each pair of orthonormal vectors $v_0,v_1\in\nu_{\overline{p}} M_{j,\ell}$
the family of shape operators $L(t)=\mathcal{A}_{\cos(t)v_0+\sin(t)v_1}=\cos(t)\mathcal{A}_{v_0}+\sin(t)\mathcal{A}_{v_1}, t\in\R$, is isospectral. 
We introduce the short-hand notation $L=L(t)$, $L_0=\mathcal{A}_{v_0}$, $L_1=\mathcal{A}_{v_1}$. 
Consequently, if we restrict ourselves to the case $(g,m)=(6,1)$ we get an isospectral family $L(t)$ with spectrum $\lbrace -\sqrt{3},-\tfrac{1}{\sqrt{3}},0,\tfrac{1}{\sqrt{3}},\sqrt{3}\rbrace$,
where all eigenvalues have multiplicity $1$ - these families are classified in Theorem\,\ref{zus}.

\subsection{Proof of homogeneity}
\label{23}
Takagi and Takahashi \cite{tt} classified homogeneous isoparametric hypersurfaces in spheres and
in particular showed that for $(g,m)=(6,1)$ there is only one example, namely the isoparametric hypersurface is given by orbits of the isotropy representation of $\Gtwo/\SO(4)$.
Before proving the main result of this section we consider the homogeneous example more detailed.

\smallskip

In \cite{siff} the symmetric, trilinear form $\alpha$ was introduced by $$\alpha(\,\cdot\,,\,\cdot\,,\,\cdot\,)=g_0((\nabla^0_{\,\cdot\,}A_0)\,\cdot\,,\,\cdot\,),$$ where
$g_0=F_0^{*}\left\langle \cdot,\cdot\right\rangle_{\Sph^{n+1}}$ and $\nabla^0$ is
the associated Levi-Civita connection.
Furthermore,
it was shown, using the computations in \cite{mi33},
that for the homogeneous example with $(g,m) = (6,1)$ the components $\alpha_{i,\,j,\,k}:=\alpha(e_i,e_j,e_k)$ are given by
\begin{align}
\label{ex1}
\alpha_{1,\,2,\,3}=\alpha_{3,\,4,\,5}=\alpha_{1,\,5,\,6}=\sqrt{\tfrac{3}{2}},\,\alpha_{2,\,4,\,6}=-\sqrt{\tfrac{3}{2}},\,\alpha_{1,\,3,\,5}=-2\sqrt{\tfrac{3}{2}}
\end{align}
and
all other $\alpha_{i,\,j,\,k}$ with $i\leq j\leq k$ vanish, or by 
\begin{align}
\label{ex2}
\alpha_{4,\,5,\,6}=\alpha_{2,\,3,\,4}=\alpha_{1,\,2,\,6}=\sqrt{\tfrac{3}{2}},\,\alpha_{1,\,3,\,5}=-\sqrt{\tfrac{3}{2}},\,\alpha_{2,\,4,\,6}=-2\sqrt{\tfrac{3}{2}},
\end{align}
and all other $\alpha_{i,\,j,\,k}$ with $i\leq j\leq k$ vanish. Note that (\ref{ex2}) is obtained from (\ref{ex1}) by flipping the orientation of the isoparametric hypersurface.

\smallskip

The strategy for proving that all isoparametric hypersurfaces in $\Sph^7$ with $g=6$ are homogeneous is as follows: the main step consists in the proof of the 
fact that for these isoparametric hypersurfaces all $\alpha_{i,\,j,\,k}^2$ coincide with those of the homogeneous example, i.e., either with (\ref{ex1}) or (\ref{ex2}). Then the desired result follows by the following proposition of Abresch.

\begin{proposition}[see Proposition\,12.5 in \cite{abresch}]
\label{ab}
Isoparametric hypersurfaces ${M}\subset\Sph^{7}$ with $g=6$ are homogeneous if and only if all the functions $\alpha_{i,\,j,\,k}^2$ are constant on ${M}\subset\Sph^{7}$. 
\end{proposition}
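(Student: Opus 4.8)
The statement to establish is the equivalence, for an isoparametric hypersurface $M\subset\Sph^7$ with $g=6$, between homogeneity of $M$ and the constancy on $M$ of all the functions $\alpha_{i,j,k}^2$. Since this is cited as Proposition 12.5 of Abresch \cite{abresch}, my plan is to reconstruct Abresch's argument. The forward implication is the easy one: if $M$ is homogeneous, then by the classification of Takagi--Takahashi \cite{tt} it is congruent to the orbit example coming from the isotropy representation of $\Gtwo/\SO(4)$, and for that example the components $\alpha_{i,j,k}$ were computed explicitly in \cite{mi33,siff} (they are the constants in (\ref{ex1}), up to the orientation flip (\ref{ex2})). In particular each $\alpha_{i,j,k}^2$ is a constant function on $M$. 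More conceptually, even without invoking the classification, homogeneity means the isometry group acts transitively on $M$ while preserving $F_0$, $\nu_0$, $A_0$ and hence the tensor $\alpha$ up to the action on frames; choosing the principal curvature frame $\{e_1,\dots,e_6\}$ (which is determined up to the finite ambiguities of reordering eigenspaces within equal eigenvalues and sign changes $e_i\mapsto -e_i$), the numbers $\alpha_{i,j,k}^2$ are invariant under all these ambiguities, so they are constant along orbits, hence on all of $M$.

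\textbf{The converse.} The substantive direction is: if all $\alpha_{i,j,k}^2$ are constant on $M$, then $M$ is homogeneous. The strategy is to show that the full local geometry of $M$ near any point is then rigidly determined, so that $M$ is locally isometric to the homogeneous example, and then to upgrade this to a global statement. First I would recall the structure equations relating $\alpha$ to the second fundamental form: in the principal frame, $A_0e_i=\lambda_i^0 e_i$ with the $\lambda_i^0=\cot((i-j)\pi/6)$ fixed by M\"unzner, and $\alpha_{i,j,k}=g_0((\nabla^0_{e_i}A_0)e_j,e_k)$ encodes the connection coefficients $\langle\nabla^0_{e_i}e_j,e_k\rangle$ against the curvature gaps $\lambda_j^0-\lambda_k^0$. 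The Gauss and Codazzi equations for the hypersurface $M\subset\Sph^7$, together with the constancy of the principal curvatures, then translate into a closed system of algebraic-differential relations among the $\alpha_{i,j,k}$ and their covariant derivatives. The key point is that, under the hypothesis that each $\alpha_{i,j,k}^2$ is constant, differentiating these identities forces the covariant derivatives of $\alpha$ to be expressible algebraically in $\alpha$ itself, so the pair $(g_0,\alpha)$ satisfies a first-order closed system; by the uniqueness theorem for such systems (an ODE-type rigidity argument along curves, or a Cartan--Ambrose--Hicks style argument), the germ of $M$ at any point is determined up to isometry by the constant array $(\alpha_{i,j,k}^2)$.

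\textbf{Identifying the germ and globalizing.} Next I would show that the constant array $(\alpha_{i,j,k}^2)$ is itself essentially unique. The Gauss equation, evaluated in the principal frame with the fixed $\lambda_i^0$, gives quadratic constraints on the $\alpha_{i,j,k}$; combined with the first Bianchi-type symmetries of $\alpha$ (it is totally symmetric) and the constancy hypothesis, these constraints should have, up to the frame ambiguities noted above, only the solution (\ref{ex1})--(\ref{ex2}). This is the place where the special value $g=6$, $n=6$ enters decisively: the number of equations exactly pins down the configuration. Once the germ is pinned down, it coincides with the germ of the $\Gtwo/\SO(4)$ example, which is real-analytic and complete; since isoparametric hypersurfaces in spheres are real-analytic (the defining Cartan--M\"unzner polynomial is polynomial) and $M$ is connected and complete (a compact hypersurface of the sphere, by M\"unzner's theory the level set of a polynomial), analytic continuation of the local isometry extends it to a global isometry, so $M$ is congruent to the homogeneous example and in particular homogeneous.

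\textbf{Main obstacle.} The hard part is the converse, and within it the rigidity step: showing that constancy of the scalars $\alpha_{i,j,k}^2$ (not the $\alpha_{i,j,k}$ themselves, whose signs are only frame-dependent) already forces the covariant derivative $\nabla^0\alpha$ to be algebraically determined, so that the structure equations close up. One must handle the sign ambiguities carefully --- a priori $\alpha_{i,j,k}$ could vary its sign from point to point even with $\alpha_{i,j,k}^2$ constant --- and show that the integrability (Codazzi/Gauss) conditions rule this out except for global sign changes corresponding to the two orientations (\ref{ex1}) and (\ref{ex2}). The bookkeeping of which index triples $\{i,j,k\}$ can support a nonzero $\alpha$ (governed by the six-fold symmetry of the curvature values) and the resulting combinatorics of the quadratic Gauss constraints is the technical heart; it is exactly the computation carried out in Abresch's Section 12, which I would follow, reproducing only the parts needed to see that the solution set is the single homogeneous configuration up to orientation.
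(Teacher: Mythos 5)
The paper does not prove this proposition at all: it is imported verbatim from Abresch (Proposition 12.5, cited as \cite{abresch}/\cite{abresch2}) and used as a black box in Subsection \ref{23}, so there is no in-paper argument to compare yours against; what has to be judged is whether your reconstruction stands on its own. Your forward direction is fine: with $m=1$ the principal frame is determined up to signs and reordering within nothing (the eigenvalues are simple and ordered), the squares $\alpha_{i,j,k}^2$ are invariant under these ambiguities, and a transitive isometry group preserving $F_0$, $\nu_0$, $A_0$ makes them constant. The converse, which is the entire content, is where the proposal has genuine gaps. The claim that constancy of the scalars $\alpha_{i,j,k}^2$ forces $\nabla^0\alpha$ to be algebraically expressible in $\alpha$, so that the Gauss--Codazzi structure equations close into a first-order rigid system, is precisely the computation that would constitute the proof; you assert it (``the key point is that \dots forces \dots'') without deriving it, and the sign problem you yourself flag --- only $\alpha_{i,j,k}^2$, not $\alpha_{i,j,k}$, is assumed constant, so signs could a priori flip from point to point --- is acknowledged but never resolved. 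Likewise, the assertion that the quadratic constraints admit, up to frame ambiguities, only the configuration (\ref{ex1})--(\ref{ex2}) is offered with a ``should have''; that uniqueness statement is of comparable difficulty to the classification this paper spends Section \ref{sec3} and Theorem \ref{zus} establishing (and which Dorfmeister--Neher \cite{dn} needed heavy algebra for), so it cannot simply be waved through inside a proof of the proposition.

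There is also a gap in the globalization step: extending a local isometry of real-analytic Riemannian manifolds to a global one by a Cartan--Ambrose--Hicks type argument requires completeness together with simple connectivity (or an explicit monodromy argument), and an isoparametric hypersurface with $m=1$ has circle curvature leaves and need not be simply connected; compactness and analyticity alone do not yield the extension, nor is ``locally isometric to the homogeneous example'' the same as ``homogeneous'' without such an argument. In sum, the proposal is a reasonable outline of how a proof of Abresch's Proposition 12.5 might go, but the rigidity/closure step, the treatment of the sign ambiguities, and the uniqueness of the constant solution --- the actual mathematical substance --- are missing rather than proved.
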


\begin{theorem}
Isoparametric hypersurfaces in $\Sph^{7}$ with $(g,m)=(6,1)$ are homogeneous.
\end{theorem}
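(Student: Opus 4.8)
The plan is to combine Theorem~\ref{zus} with Proposition~\ref{ab}, so the real content is to show that every isoparametric hypersurface $M\subset\Sph^7$ with $(g,m)=(6,1)$ has all $\alpha_{i,j,k}^2$ constant and equal to those of the homogeneous example. First I would fix a point $\overline p$ in a focal submanifold $M_{j,\ell}$ and an orthonormal pair $v_0,v_1\in\nu_{\overline p}M_{j,\ell}$, and — after choosing the eigenbasis of $L_0=\mathcal A_{v_0}$ normalized so that $L_0=\operatorname{diag}(\sqrt3,\tfrac1{\sqrt3},0,-\tfrac1{\sqrt3},-\sqrt3)$ — apply Theorem~\ref{zus} to the isospectral family $L(t)=\cos(t)\mathcal A_{v_0}+\sin(t)\mathcal A_{v_1}$. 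This pins $L_1=\mathcal A_{v_1}$ down to one of the six explicit matrices (up to the stabilizer of $L_0$ in $\OO(5)$ and up to orientation), so the pair $(\mathcal A_{v_0},\mathcal A_{v_1})$ of second fundamental forms at $\overline p$ is determined up to a small discrete set of normal forms.

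Next I would translate this back into information about the trilinear form $\alpha$ on $M$ itself. The key point is that $\alpha(\,\cdot\,,\,\cdot\,,\,\cdot\,)=g_0((\nabla^0_{\,\cdot\,}A_0)\,\cdot\,,\,\cdot\,)$ on $M$ is, via M\"unzner's identification of $M$ with its focal data, expressible through the off-diagonal structure of the shape operators $\mathcal A_\nu$ of the focal manifold as $\nu$ varies over the (two-dimensional) normal space; concretely, the components $\alpha_{i,j,k}$ are read off from the entries of $\mathcal A_{v_1}$ relative to the eigenbasis of $\mathcal A_{v_0}$. Running through the six normal forms of Theorem~\ref{zus}, I would compute the resulting unordered list of squares $\{\alpha_{i,j,k}^2\}$ for each. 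The expectation, which must be verified case by case, is that only the normal forms with $\dim E=1$ are geometrically admissible (the larger-$E$ cases should be excluded because a nonconstant kernel $\ker L(t)$ would contradict the constancy of the focal dimension, or equivalently M\"unzner's rigidity of the focal spectrum along the normal bundle), and that each admissible normal form yields exactly the multiset of squares appearing in~(\ref{ex1}) (equivalently~(\ref{ex2})). Since $\overline p$, $j$, $\ell$, and the orthonormal frame $v_0,v_1$ were arbitrary, this shows $\alpha_{i,j,k}^2$ is, at every point and in every admissible basis, the homogeneous list; in particular each function $\alpha_{i,j,k}^2$ is constant on $M$.

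With the $\alpha_{i,j,k}^2$ shown to be constant, Proposition~\ref{ab} immediately gives that $M$ is homogeneous, and then the Takagi--Takahashi classification~\cite{tt} identifies it with the orbit example coming from the isotropy representation of $\Gtwo/\SO(4)$.

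The main obstacle is the middle step: rigorously linking the classification of the isospectral \emph{pair} $(L_0,L_1)$ at a single point of a focal manifold to the \emph{pointwise} constancy of $\alpha_{i,j,k}^2$ on the hypersurface. Two subtleties have to be handled carefully. First, one must verify that the freedom in Theorem~\ref{zus} — conjugation by the $\OO(5)$-stabilizer of $L_0$, permuting/signing the eigenbasis, and flipping orientation — acts on the triple indices in a way that permutes the list $\{\alpha_{i,j,k}^2\}$ but does not change it as a multiset; this is what makes the squares, rather than the $\alpha_{i,j,k}$ themselves, the right invariant. Second, one has to rule out the normal forms with $\dim E>1$ on geometric grounds: these correspond to isospectral families satisfying only the minimal polynomial equation of \emph{one} focal manifold, whereas a genuine isoparametric hypersurface forces compatibility conditions (the shape operators of \emph{both} focal manifolds, plus M\"unzner's formula for the whole parallel family) that the $\dim E>1$ families violate — concretely, $\dim E>1$ would mean $\ker L(t)$ moves with $t$, contradicting the fact that the focal leaf through $\overline p$ has a well-defined tangent space. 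Once these two points are nailed down the rest is the routine case-by-case entry comparison sketched above.
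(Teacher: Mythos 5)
Your high-level plan (Theorem~\ref{zus} $\Rightarrow$ constancy of $\alpha_{i,j,k}^2$ $\Rightarrow$ Proposition~\ref{ab} $\Rightarrow$ Takagi--Takahashi) matches the paper, and you correctly identify that $\alpha$ at a point of $M$ is read off from the entries of $L_1=\mathcal A_{v_1}$ in an eigenbasis of $L_0$, and that continuity/connectedness fix the ``type'' of normal form pointwise. The gap is in the crucial exclusion step, and it is a real one.

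You propose to discard the normal forms with $\dim E>1$ ``on geometric grounds,'' arguing that a non\-constant $\ker L(t)$ ``would contradict the constancy of the focal dimension'' or that ``the focal leaf through $\overline p$ has a well-defined tangent space.'' This heuristic does not work: for every $t$ one still has $\dim\ker L(t)=m=1$, so the focal dimension and the isospectrality are untouched; $\dim E>1$ merely means the $1$-dimensional kernel rotates inside a larger fixed subspace as the normal direction $\nu=\cos(t)v_0+\sin(t)v_1$ varies, which by itself violates no a priori rigidity. Indeed the paper's Remark after Lemma~\ref{falle4nn} makes precisely this point by exhibiting explicit isospectral families with moving kernel, and the Appendix shows that Miyaoka's attempt to derive a contradiction from $\dim E\in\{2,3\}$ by this kind of argument is incorrect. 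Moreover, even if such an argument could be made to work it would be insufficient: the fourth normal form in Theorem~\ref{zus} has $\dim E=1$ and must nevertheless be excluded, so a criterion based only on $\dim E$ cannot single out the homogeneous pair.

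What actually rules out the first four normal forms is the cross-compatibility of the shape operators at the \emph{different} focal points $p_{i,\ell}$, expressed through the explicit cyclic formula for $L_1^{i,\ell}$ in terms of the $\alpha_{i,j,k}$ (given at the start of the paper's proof). Fixing the type of, say, $L_1^{6,0}$ pins down certain $\alpha$'s; these same $\alpha$'s reappear as entries of $L_1^{1,0}$ or $L_1^{2,0}$, and for types one through four the resulting entry (e.g.\ $\tfrac{5}{3}$, $\sqrt{2/3}$, $\sqrt2$, or $1$) does not occur in any of the six admissible matrices, a contradiction. You gesture at ``compatibility conditions'' involving both focal manifolds, which is the right instinct, but then replace it with the incorrect kernel-rigidity heuristic; to complete the proof you need the concrete simultaneous-normal-form argument across the six focal points, not a one-focal-point geometric obstruction.
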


\begin{proof}
Let an isoparametric hypersurface $M$ and $p\in M$ be given.
Recall that for $s_0=\theta_i+\ell\pi$, $\ell\in\Z_2$, the map $F_{s_0}$ focalizes $\frak{L}_i(p)$ to one point $p_{i,\ell}\in M_{i,\ell}$. 
Let the isospectral family of focal shape operators at this point be denoted by $L^{i,\ell}(t)=\cos(t)L_0+\sin(t)L_1^{i,\ell}$ und assume 
$L_0=\mbox{diag}(\sqrt{3},\frac{1}{\sqrt{3}},0,-\frac{1}{\sqrt{3}},-\sqrt{3})$. Then it is easy to prove that $L_1^{i,\ell}$ is given by
\begin{align*}
(-1)^{\ell}\left(\begin{array}{ccccc}
0&\sqrt{\frac{2}{3}}\,\alpha_{i,\,{i+1},\,{i+2}}&\frac{1}{\sqrt{2}}\,\alpha_{i,\,i+1,\,i+3}&\sqrt{\frac{2}{3}}\,\alpha_{i,\,i+1,\,i+4}&\sqrt{2}\,\alpha_{i,\,i+1,\,i+5}\\
\sqrt{\frac{2}{3}}\,\alpha_{i,\,i+1,\,i+2}&0&\frac{1}{\sqrt{6}}\,\alpha_{i,\,i+2,\,i+3}&\frac{\sqrt{2}}{3}\,\alpha_{i,\,i+2,\,i+4}&\sqrt{\frac{2}{3}}\,\alpha_{i,\,i+2,\,i+5}\\
\frac{1}{\sqrt{2}}\,\alpha_{i,\,i+1,\,i+3}&\frac{1}{\sqrt{6}}\,\alpha_{i,\,i+2,\,i+3}&0&\frac{1}{\sqrt{6}}\,\alpha_{i,\,i+3,\,i+4}&\frac{1}{\sqrt{2}}\,\alpha_{i,\,i+3,\,i+5}\\
\sqrt{\frac{2}{3}}\,\alpha_{i,\,i+1,\,i+4}&\frac{\sqrt{2}}{3}\,\alpha_{i,\,i+2,\,i+4}&\frac{1}{\sqrt{6}}\,\alpha_{i,\,i+3,\,i+4}&0&\sqrt{\frac{2}{3}}\,\alpha_{i,\,i+4,\,i+5}\\
\sqrt{2}\,\alpha_{i,\,i+1,\,i+5}&\sqrt{\frac{2}{3}}\,\alpha_{i,\,i+2,\,i+5}&\frac{1}{\sqrt{2}}\,\alpha_{i,\,i+3,\,i+5}&\sqrt{\frac{2}{3}}\,\alpha_{i,\,i+4,\,i+5}&0
\end{array}\right),
\end{align*}  
where $\alpha_{k_1,\,k_2,\,k_3}=\alpha_{\vert p}(e_{k_1},e_{k_2},e_{k_3})$ and the indices are cyclic of order $6$.

\smallskip

First we prove that the 'type' of $L_1^{i,\ell}(p)$ is constant for $p\in M$:
$L_1^{i,\ell}(p)$ is of one of the forms given in Theorem\,\ref{zus}. Since $L_1^{i,\ell}(p)$ depends continuously on $p\in M$ and $M$ is connected the form of $L_1^{i,\ell}(p)$
is constant for all $p\in M$.

\smallskip

Next we show that $L_1^{i,\ell}(p)$ can only be of the fifth or sixth type listed in Theorem\,\ref{zus} and thus coincide with the homogenous example.\\
Let us first suppose that $L_1^{6,0}(p)$ is of the first form listed in Theorem\,\ref{zus}. This implies $\alpha_{1,\,2,\,6}=\tfrac{5}{3\sqrt{2}}$ which in turn yields that the $(1,5)$-entry of $L_1^{1,0}(p)$ is given by $\tfrac{5}{3}$. However this coefficient does not arise in one of the possible $L_1$ listed in Theorem\,\ref{zus} and therefore this case cannot arise.\\
Next we suppose that $L_1^{6,0}(p)$ is of the second form listed in Theorem\,\ref{zus}. This implies $\alpha_{2,\,3,\,6}=1$ which in turn yields that the $(1,4)$-entry of $L_1^{2,0}(p)$ is given by $\sqrt{\tfrac{2}{3}}$. However this coefficient does not arise in one of the possible $L_1$ listed in Theorem\,\ref{zus} and therefore this case cannot arise.\\
Next we suppose that $L_1^{6,0}(p)$ is of the third form listed in Theorem\,\ref{zus}. This implies $\alpha_{1,\,3,\,6}=\sqrt{3}$ which in turn yields that the $(2,5)$-entry of $L_1^{1,0}(p)$ is given by $\sqrt{2}$. However this coefficient does not arise in one of the possible $L_1$ listed in Theorem\,\ref{zus} and therefore this case cannot arise.\\
Next we suppose that $L_1^{6,0}(p)$ is of the fourth form listed in Theorem\,\ref{zus}. This implies $\alpha_{1,\,2,\,6}=\alpha_{4,\,5,\,6}=1/\sqrt{2}$ and $\alpha_{1,\,4,\,6}=\alpha_{2,\,5,\,6}=\sqrt{2}$ which in turn yields that the $(1,5)$-entry and the $(3,5)$-entry of $L_1^{1,0}(p)$ are given by $1$. However this contradicts Theorem\,\ref{zus} and therefore this case cannot arise.\\
Finally for the fifth and sixth case one proves easily that everything is consistent and that in these cases all $\alpha(e_i,e_j,e_k)^2$ with $i,j,k\in\left\{1,...,6\right\}$ coincide with those of (\ref{ex2}) and (\ref{ex1}), respectively.

\smallskip

Therefore for all isoparametric hypersurfaces in $\Sph^7$ with $(g,m)=(6,1)$ all $\alpha(e_i,e_j,e_k)^2$ with $i,j,k\in\left\{1,...,6\right\}$ coincide with those of the homogeneous example and are in particular constant.
Hence the claim follows from Proposition\,\ref{ab}, i.e., Proposition\,12.5 in \cite{abresch2}.
\end{proof}

\appendix
\section{Counterexamples to the proof of Miyaoka \cite{mi1,mi3}}
We give counterexamples to some of Miyaoka's proofs in \cite{mi1,mi3}.

\subsection{Proposition\,8.1 and Proposition\,8.2 in \cite{mi3} are not compatible}
In Paragraph\,$3$ of \cite{mi3} Miyaoka claims to prove by contradiction that the case $\dim E=3$ does not occur. 
Although the statement is true the proof is incorrect: we show that Proposition\,8.1 and Proposition\,8.2 are not compatible.

\smallskip

In Proposition\,8.1 Miyaoka \cite{mi1,mi3} claims that $\lbrace e_3(t),X_1(t),X_2(t)\rbrace$ and
$\lbrace Z_1(t),Z_2(t)\rbrace$ 
constitute orthonormal frames of $E$ and $E^{\perp}$, respectively,
where
\begin{align*}
&X_1(t)=\alpha(t)(e_1(t)+e_5(t))+\beta(t)(e_2(t)+e_4(t)),\\
&X_2(t)=\tfrac{1}{\sqrt{\sigma(t)}}(\tfrac{\beta(t)}{\sqrt{3}}(e_1(t)-e_5(t))-\sqrt{3}\alpha(t)(e_2(t)-e_4(t)),\\
&Z_1(t)=\tfrac{1}{\sqrt{\sigma(t)}}(\sqrt{3}\alpha(t)(e_1(t)-e_5(t))+\tfrac{\beta(t)}{\sqrt{3}}(e_2(t)-e_4(t)),\\
&Z_2(t)=\beta(t)(e_1(t)+e_5(t))-\alpha(t)(e_2(t)+e_4(t))
\end{align*}
and $\alpha,\beta,\sigma$ are differentiable real functions on the interval $\lbrack 0,3\pi\rbrack$ satisfying $\alpha^2+\beta^2=\tfrac{1}{2}$ and $\sigma=2(3\alpha^2+\tfrac{1}{3}\beta^2)$.

\smallskip

Below we assume that $L(t)$ is given as in 
Lemma\,\ref{falle4nn} where we chose without loss of generality the $L_1$ with the $+$-sign.
Consider the following unit eigenvectors of $L(t)$:
\begin{align*}
e_1(t) = (f_1(t),\tfrac{1}{6}(3\sin(t)+\sin(2 t)), \tfrac{4}{9}\sin^2(t),\tfrac{1}{6}(3\sin(t) -\sin(2t)),f_1(t+\pi))^{tr},
\end{align*}
where $f_1(t)=\tfrac{1}{9}\cos^2(\tfrac{t}{2})(7+2\cos(t))$,
is a unit eigenvector of $L(t)$ with eigenvalue $\sqrt{3}$;
\begin{align*}
e_2(t) = (f_2(t),\cos^2(\tfrac{t}{2})(1-2\cos(t)),-\tfrac{2}{3}\sin(2t), -(1+ 2\cos(t))\sin^2(\tfrac{t}{2}),f_2(t+\pi))^{tr}
\end{align*}
where $f_2(t)=\tfrac{1}{6}(3+2\cos(t))\sin(t)$,
is a unit eigenvector of $L(t)$ with eigenvalue $\tfrac{1}{\sqrt{3}}$;
\begin{align*}
e_3(t) = (\tfrac{4}{9}\sin^2t, -\tfrac{2}{3}\sin(2t),\tfrac{1}{9}(1+8\cos(2t)),\tfrac{2}{3}\sin(2t), \tfrac{4}{9}\sin^2t)^{tr}
\end{align*}
is an eigenvector of $L(t)$ with eigenvalue $0$. Then $e_4(t)=\pm e_2(t+\pi)$ and $e_5(t)=\pm e_1(t+\pi)$ are eigenvectors of $L(t)$ with eigenvalues $-\tfrac{1}{\sqrt{3}}$
and $-\sqrt{3}$, respectively. Following Miyaoka \cite{mi3} we assume $e_4(t)=e_2(t+\pi)$ and $e_5(t)=e_1(t+\pi)$.
Thus we get $$E=\mbox{span}((0, 0, 1, 0, 0)^{tr}, (1, 0, 0, 0, 1)^{tr}, (0, -1, 0, 1, 0)^{tr}).$$
Therefore $e_1(t)+e_5(t),e_2(t)+e_4(t)\in E$ but $e_1(t)-e_5(t),e_2(t)-e_4(t)\in E^{\perp}$.
Consequently, the element $X_2(t)$ does not lie in $E$, contradicting Proposition\,8.1 in \cite{mi1}.

\smallskip

One may try to avoid this problem by another choice of the eigenvectors $e_4(t)$ and $e_5(t)$. Note that for any admissible choice of $e_4(t)$ and $e_5(t)$ we have:
if $\alpha(t)\neq 0$ and $\beta(t)\neq 0$ at least one of the vectors $X_1(t)$ or $X_2(t)$ does not lie in $E$.  Thus either $\alpha\equiv 0$ or $\beta\equiv 0$ and we may assume without loss of generality that $\alpha\equiv 0$. In order for $X_1(t),X_2(t)$ to lie in $E$ we must have $e_4(t)=-e_2(t+\pi)$ and $e_5(t)=e_1(t+\pi)$, which implies $e_4(0)=-e_2(\pi)$ and $e_4(\pi)=-e_2(0)$. However, this implies that the proof of Proposition\,8.2 \cite{mi1,mi3} does not work anymore. Indeed, we no longer obtain a proof by contradiction: just follow along the lines of this proof and use $e_1(\pi)=e_5(0)$, $e_2(\pi)=-e_4(0)$, $e_3(\pi)=e_3(0)$, $e_4(\pi)=-e_2(0)$ and $e_5(\pi)=e_1(0)$. 

\smallskip

Conclusion: the contradiction obtained in \cite{mi1} and \cite{mi3} results from the inadmissible
assumption that Proposition\,8.1 in \cite{mi1} and $e_4(t)=e_2(t+\pi)$, $e_5(t)=e_1(t+\pi)$ hold.
If we change the sign of exactly one of the eigenvectors $e_4(t)$ or $e_5(t)$ Proposition\,8.1 is true but then the proof of Proposition\,8.2
becomes incorrect.

\subsection{Counterexample to the proof of Proposition\,7.1 in \cite{mi3}}
In \cite{mi3} Proposition\,7.1 is used to exclude the case $\dim E=2$.

\smallskip

Below we suppose that $L(t)$ is given as in  
Lemma\,\ref{ph2}, where we assume that $L_1$ is of the first form stated in this lemma - the argument is similar for the case when $L_1$ is of the second form in that lemma.
Then
\begin{align*}
&e_1(t)= (\cos(t/2), 0, 0, 0, \sin(t/2))^{tr},\\
&e_2(t)= (0, \cos^2(t/2), \sin(t)/\sqrt{2},\sin^2(t/2), 0)^{tr},\\
&e_3(t)= (0, -\sin(t)/\sqrt{2}, \cos(t), \sin(t)/\sqrt{2}, 0)^{tr},\\
&e_4(t)= (0, \sin^2(t/2), -\sin(t)/\sqrt{2}, \cos^2(t/2), 0)^{tr},\\
&e_5(t)= (-\sin(t/2), 0, 0, 0, \cos(t/2))^{tr}
\end{align*}
constitutes an orthonormal basis of eigenvectors of $L(t)$ where the corresponding eigenvalues are given by $\sqrt{3},\tfrac{1}{\sqrt{3}},0,-\tfrac{1}{\sqrt{3}}$ and $-\sqrt{3}$, respectively.
Hence $e_3(\pi)=-e_3(0)$, $e_2(\pi)=e_4(0)$, $e_4(\pi)=e_2(0)$, $e_1(\pi)=e_5(0)$ and $e_5(\pi)=-e_1(0)$.
This example proves that not only the four cases listed in \cite{mi1,mi3}, namely $(e_1+e_5)(\pi)=(e_1+e_5)(0)$ and $(e_2+e_4)(\pi)=\pm(e_2+e_4)(0)$ or
 $(e_1+e_5)(\pi)=-(e_1+e_5)(0)$ and $(e_2+e_4)(\pi)=\pm(e_2+e_4)(0)$ occur. The missing cases cannot be excluded by the argument given in \cite{mi1,mi3}.

\section*{Acknowledgements}
It is a pleasure to thank Prof. Dr. U. Abresch for numerous mathematical discussions.
Furthermore, I am very grateful to Prof. Dr. W. Ziller for his many valuable comments. Finally, I would like to thank the referee for his very helpful recommendations.

\nocite{*}

\end{document}